
\documentclass{article}

\usepackage{microtype}
\usepackage{graphicx}
\usepackage{subfigure}
\usepackage{booktabs} 

\usepackage{graphicx,verbatim,lineno,titletoc}
\usepackage{amssymb,mathrsfs}
\usepackage{amsmath}
\usepackage{adjustbox}
\usepackage{calc}
\usepackage{ytableau}
\usepackage{array}
\usepackage{tikz}
\usetikzlibrary{shapes.multipart,patterns,arrows}
\usepackage{appendix}

\usepackage[all]{xy}
\usepackage{enumerate}
\usepackage[english]{babel}
\usepackage{multirow}
\usepackage{float}
\usepackage{enumitem}
\usepackage{accents}
\usepackage{mathtools}
\mathtoolsset{showonlyrefs=true}
\usepackage{wrapfig}
\usepackage{pifont}

\usepackage{hyperref}



\usepackage[accepted]{icml2024}

\usepackage{amsmath}
\usepackage{amssymb}
\usepackage{mathtools}
\usepackage{amsthm}


\theoremstyle{plain}
\newtheorem{theorem}{Theorem}[section]

\newtheorem{lemma}[theorem]{Lemma}
\newtheorem{corollary}[theorem]{Corollary}
\theoremstyle{definition}
\newtheorem{definition}[theorem]{Definition}

\theoremstyle{remark}

\theoremstyle{prop}
\newtheorem{prop}[theorem]{Proposition}
\newtheorem{example}[theorem]{Example}

\newenvironment{customassumption}[1]
{\innercustomassumption}
{\endinnercustomassumption}
\usepackage[textsize=tiny]{todonotes}


\def\R{\mathbb{R}}

\def\eps{\varepsilon}

\def\X{\mathbf{X}}

\def\param{\boldsymbol{\theta}}
\def\Param{\boldsymbol{\Theta}}

\definecolor{hancolor}{rgb}{0.1, 0.0, 0.9}
\definecolor{YLcolor}{rgb}{0.8, 0.1, 0.1}


\DeclareMathOperator*{\argmin}{arg\,min}

\DeclareMathOperator*{\argmax}{arg\,max}

\DeclareMathOperator{\rtr}{\textup{Rtr}}
\DeclareMathOperator{\rinj}{r_{\textup{inj}}}

\DeclareMathOperator{\rrtr}{r_{\textup{Rtr}}}

\def\M{\mathcal{M}}
\newcommand{\Exp}{\operatorname{Exp}}
\newcommand{\Proj}{\operatorname{Proj}}
\newcommand{\grad}{\operatorname{grad}}

\icmltitlerunning{Convergence and Complexity Guarantee for  Inexact First-order Riemannian Optimization Algorithms}

\begin{document}

\twocolumn[
\icmltitle{Convergence and Complexity Guarantee for\\  Inexact First-order Riemannian Optimization Algorithms}



\icmlsetsymbol{equal}{*}

\begin{icmlauthorlist}
\icmlauthor{Yuchen Li}{wisc}
\icmlauthor{Laura Balzano}{umich}
\icmlauthor{Deanna Needell}{ucla}
\icmlauthor{Hanbaek Lyu}{wisc}
\end{icmlauthorlist}

\icmlaffiliation{wisc}{Department of Mathematics, University of Wisconsin-Madison, WI 53706, USA}
\icmlaffiliation{umich}{Department of Electrical Engineering and Computer Science, University of Michigan, Ann Arbor, MI 48109, USA}
\icmlaffiliation{ucla}{Department of Mathematics, University of California, 
		Los Angeles, CA 90025, USA}

\icmlcorrespondingauthor{Hanbaek Lyu}{hlyu@math.wisc.edu}

\icmlkeywords{Machine Learning, ICML}

\vskip 0.3in
]



\printAffiliationsAndNotice{}  

\begin{abstract}

We  analyze inexact Riemannian gradient descent (RGD) where Riemannian gradients and retractions are inexactly (and cheaply) computed. Our focus is on understanding when inexact RGD converges and what is the complexity in the general nonconvex and constrained setting. We answer these questions in a general framework of tangential Block Majorization-Minimization (tBMM). We establish that tBMM converges to an $\epsilon$-stationary point within $O(\epsilon^{-2})$ iterations. Under a mild assumption, the results still hold when the subproblem is solved inexactly in each iteration provided the total optimality gap is bounded. Our general analysis applies to a wide range of classical algorithms with Riemannian constraints including inexact RGD and proximal gradient method on Stiefel manifolds. We numerically validate that tBMM shows improved performance over existing methods when applied to various problems, including nonnegative tensor decomposition with Riemannian constraints, regularized nonnegative matrix factorization, and low-rank matrix recovery problems.
\end{abstract}

\section{Introduction}
\label{sec:intro}

A typical formulation of constrained Riemannian optimization takes the following form:
	\begin{align}\label{eq:def_CROPT}
		\min_{\param\in \Param \subseteq \mathcal{M}}  \left(f(\param):=\varphi(\param) + \psi(\param)\right),
	\end{align}
	where $\mathcal{M}$ is a smooth Riemannian manifold embedded in a Euclidean space, $\Param$ is a closed subset of $\mathcal{M}$,  and $f:\mathcal{M}\rightarrow \R$ is an objective function consisting of a smooth part $\varphi$ and convex (and possibly nonsmooth) part $\psi$. Riemannian optimization problems of the form \eqref{eq:def_CROPT} have a wide array of applications ranging from the computation of linear algebraic quantities and factorizations and problems with nonlinear differentiable constraints to the analysis of shape space and automated learning \cite{ring2012optimization,jaquier2020bayesian}. These applications arise either because of implicit constraints on the problem to be optimized or because the domain is naturally defined as a manifold.

\textbf{Motivation: Inexact RGD.} In many Riemannian optimization problem instances, the dimension of the parameter space is much less than that of the ambient dimension. Such `latent' low-dimensional structure in the parameter space can be utilized by using the  Riemannian gradient of the objective that lives in the (low-dimensional) tangent space, instead of the full gradient in the (high-dimensional) ambient space. Thus many Riemannian optimization methods take the following form \cite{baker2008implicit,yang2007globally, boumal2019global,edelman1998geometry}: Iteratively, 
\vspace{-0.2cm}
\begin{description}[itemsep=0.1cm]
    \item[(i)] Compute descent direction in the tangent space;
    \item[(ii)] Take a step in that direction along a geodesic. 
\end{description}
\vspace{-0.2cm}
However, step \textbf{(ii)} is often challenging in practice so other approaches alleviate this burden by utilizing approximations or imposing additional assumptions \cite{absil2008optimization,boumal2023introduction}. A popular way to implement a similar idea with less computational burden for computing the geodesic is to make the parameter update first in the tangent spaces and then map the resulting point back onto the manifold using a retraction (which is like a projection from the tangent space onto the manifold). 

Perhaps the simplest and most widely used Riemannian optimization algorithms for smooth objectives $f$ of the above form is Riemannian gradient descent (RGD): 
\begin{align}\label{eq:RGD}
\hspace{-1cm}\textbf{(RGD)}\qquad 
\begin{cases}
    V_{n} &\leftarrow -\grad f(\param_{n-1}) \\
    \param_{n} &\leftarrow  \rtr_{\param_{n-1}}\left( \alpha_{n} V_{n} \right). 
\end{cases}
\end{align}
Here $\grad f(\param_{n-1})$ denotes the Riemannian gradient of $f$ at $\param_{n-1}$, $\rtr_{\param_{n-1}}$ denotes the retraction map from the tangent space $T_{\param_{n-1}}\mathcal{M}$ at base point $\param_{n-1}$ onto the manifold $\mathcal{M}$ (see Appendix \ref{sec:preliminaries_Riemannian}), and $\alpha_{n}>0$ is a step size. In the literature, one typically assumes that computing Riemannian gradients and retractions are computationally feasible. However, there are several problem instances where either computing exact Riemannian gradient or the retraction is difficult \cite{wiersema2023optimizing,ablin2022fast,wang2021no}. For such situations, it is reasonable to consider the following `inexact version' of RGD:
\begin{align}\label{eq:RGD_inexact}
\hspace{-1cm}\textbf{(Inexact RGD)}\qquad 
\begin{cases}
    \hat{V}_{n} &= -\widehat{\grad} f(\param_{n-1}) \\
    \param_{n} &=  \widehat{\rtr}_{\param_{n-1}}\left( \alpha_{n} \hat{V}_{n} \right),
\end{cases}
\end{align}
where $\widehat{\grad}$ and $\widehat{\rtr}$ are computationally feasible inexact Riemannian gradient and retraction operators, respectively. 
Our main question is the following. \textit{ When does this inexact RGD converge? What can we say about its complexity?} We answer these questions in a general framework of Riemannian tangential Block Majorization-Minimization (tBMM). Here we state a corollary of our general result for the context of inexact RGD (see proof in Appendix \ref{sec:proof_intro}). 

\begin{corollary}[Inexact RGD]\label{cor:inexact_RGD}
Let $(\param_n)_n$ be the iterations generated by the inexact RGD \eqref{eq:RGD_inexact} for solving \eqref{eq:def_CROPT} with $\psi=0$. Assume the manifold is complete 
and the objective function is uniformly lower bounded with compact sub-level sets. Then each limit point of $(\param_n)_n$ is a stationary point and an $\eps$-stationary point is obtained within $O(\eps^{-2})$ iterations if the following holds: 
\vspace{-0.3cm}
\begin{description}[itemsep=-0.1cm]
    \item[(i)] $\nabla \varphi$ is $L$-Lipschitz continuous for some $L>0$. 
    \item[(ii)] For each $n$, choose $V_n\in T_{\param_{n-1}}\mathcal{M}$ such that 
\begin{align}
\rtr_{\param_{n-1}}(V_n) = \widehat{\rtr}_{\param_{n-1}}\left( \hat{V}_{n} \right).
\end{align}
Denote $g_n(\eta):= \langle \grad f(\param_{n-1}),\eta\rangle + \frac{L}{2}\|\eta\|^2$ and define the optimality gap as 
    \begin{align}
        \Delta_{n}:= g_n(\alpha_{n} V_n)-g_n (-\frac{1}{2L}\grad f(\param_{n-1})).
    \end{align}
    Then $\sum_{n=0}^{\infty}\Delta_n <\infty$.
\end{description}
\end{corollary}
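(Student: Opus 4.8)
The plan is to realize the inexact RGD iteration \eqref{eq:RGD_inexact} as a special case of the tangential Block Majorization-Minimization (tBMM) framework and then invoke the general inexact-tBMM convergence and complexity theorem established earlier. Since $\psi=0$ and $\nabla\varphi$ is $L$-Lipschitz by (i), the natural tangent-space surrogate at the base point $\param_{n-1}$ is the quadratic model $g_n(\eta)=\langle \grad f(\param_{n-1}),\eta\rangle+\frac{L}{2}\|\eta\|^2$, which, up to the additive constant $f(\param_{n-1})$, is the standard majorizer of the pulled-back objective $\eta\mapsto f(\rtr_{\param_{n-1}}(\eta))$ supplied by the descent lemma. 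The first step is to check that $g_n$ satisfies the structural hypotheses of a tBMM surrogate: tightness at $\eta=0$ (indeed $g_n(0)=0$), $L$-strong convexity (immediate from the quadratic term), and the majorization inequality $f(\rtr_{\param_{n-1}}(\eta))\le f(\param_{n-1})+g_n(\eta)$ on the relevant set of tangent vectors.

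The second step absorbs the inexactness. By condition (ii) there is an exact tangent vector $V_n$ with $\rtr_{\param_{n-1}}(\alpha_n V_n)=\widehat{\rtr}_{\param_{n-1}}(\alpha_n\hat{V}_n)=\param_n$, so the point actually produced by the inexact scheme coincides with the exact retraction of $\alpha_n V_n$. This lets me reinterpret the inexact algorithm as an exact tBMM in which the surrogate subproblem $\min_\eta g_n(\eta)$ is solved only approximately, the approximate solution being $\alpha_n V_n$ and its suboptimality being measured against the reference point $-\frac{1}{2L}\grad f(\param_{n-1})$. A one-line computation gives $g_n(-\frac{1}{2L}\grad f(\param_{n-1}))=-\frac{3}{8L}\|\grad f(\param_{n-1})\|^2$, a strictly negative value that is a constant fraction of the exact surrogate minimum $-\frac{1}{2L}\|\grad f(\param_{n-1})\|^2$; thus $\Delta_n$ is exactly the optimality gap of the inexact subproblem solve in the sense demanded by the inexact-tBMM hypotheses.

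The third step feeds these facts into the master theorem. Combining majorization with the gap bound yields the per-step inequality $f(\param_n)\le f(\param_{n-1})-\frac{3}{8L}\|\grad f(\param_{n-1})\|^2+\Delta_n$. Since $\sum_n\Delta_n<\infty$ (the abstract's ``bounded total optimality gap'') and $f$ is uniformly lower bounded, telescoping controls $\sum_n\|\grad f(\param_{n-1})\|^2$ by a finite constant, which forces $\grad f(\param_{n-1})\to0$ and delivers $\min_{k\le N}\|\grad f(\param_{k-1})\|^2=O(1/N)$, i.e.\ an $\eps$-stationary point within $O(\eps^{-2})$ iterations. Completeness of $\mathcal{M}$ keeps the retractions and iterates well defined, while compactness of the sub-level sets confines the trajectory to a compact set so that limit points exist and are stationary.

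The main obstacle I anticipate is the majorization step: establishing that the Euclidean Lipschitz constant $L$ of $\nabla\varphi$ serves, after absorbing retraction-dependent constants, as a valid curvature constant for the surrogate of $f\circ\rtr_{\param_{n-1}}$ uniformly along the trajectory. This is precisely where completeness and compactness of the sub-level sets are needed: they render the second-order deviation of the retraction from the exponential map, and hence the effective smoothness constant of $f\circ\rtr$, uniformly bounded, so that a single $L$ (up to a fixed multiplicative factor) majorizes at every base point. Once this uniform majorization is in hand, the telescoping and complexity arguments are routine specializations of the general tBMM machinery.
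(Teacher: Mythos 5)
Your proposal follows essentially the same route as the paper's proof: transfer the retraction inexactness to subproblem inexactness via the exact tangent vector $V_n$, view the quadratic model $g_n$ as the tangential prox-linear surrogate, verify Assumptions \ref{assumption:A0_optimal_gap} and \ref{assumption:Optn2}, and invoke Theorem \ref{thm:RBMM_MtMt} (whose $m=1$, $\psi=0$ proof is exactly your descent-plus-telescoping argument). If anything, you are more explicit than the paper about the two delicate points: that the reference point $-\frac{1}{2L}\grad f(\param_{n-1})$ is not the exact minimizer of $g_n$, so the per-step descent must be run directly from $g_n(\alpha_n V_n)\le g_n(-\tfrac{1}{2L}\grad f(\param_{n-1}))+\Delta_n$ rather than from the true optimality gap, and that the Euclidean Lipschitz constant $L$ only yields a restricted-smoothness (majorization) constant for the pullback after absorbing retraction-dependent constants, which is where completeness and compact sub-level sets enter.
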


\textbf{General framework through tangential MM.} 
We establish Corollary \ref{cor:inexact_RGD} for the broader class of first-order Riemannian optimization algorithms called \textit{tangential Majorization-Minimization} (tMM). We first recall that the classical Majorization-Minimization algorithm generalizes gradient descent in Euclidean space \cite{mairal2013optimization}:

\vspace{-0.6cm}
{\small
\begin{align}
	\param_{n} - \param_{n-1} \leftarrow 
		&=  - \frac{1}{\lambda} \nabla f(\param_{n-1}) \label{eq:BProxLinear} \\
        &\hspace{-1.8cm}=\argmin_{\eta\in \R^{p}} \left[ g_{n}(\eta):= f(\param_{n-1} ) + \langle \nabla f(\param_{n-1}) ,\, \eta  \rangle + \frac{\lambda}{2} \lVert \eta \rVert^{2} \right]. \nonumber 
\end{align}
}

\noindent Assuming $\lambda\ge L$, the Lipschitz parameter for $\nabla f$,  the quadratic function $g_{n}$ above satisfies (majorization) $g_{n}(\eta)\ge f(\param_{n}+\eta)$ and (tightness) $\nabla g_{n}(0)=\nabla f(\param_{n-1})$ and is called the prox-linear surrogate of $f$ at $\param_{n-1}$. Thus, we majorize objective $f$ near $\param_{n-1}$ by the prox-linear surrogate $g_{n}$ and minimize it to find a descent direction.  

In the Riemannian setting, RGD can be thought of as a Riemannian version of MM. Notice that the prox-linear surrogate $g_{n}$ above is in fact defined on the tangent space of $\R^{p}$ at $\param_{n-1}$, and what it majorizes is not the original objective $f$, but the `pull-back' objective $f(\param_{n-1}+\eta)$ defined on the tangent space. Applying this observation to RGD, we seek to majorize the pull-back objective 
\begin{align}\label{def:pull_back_objective}
\hat{\varphi}_{n} := \varphi_{n}\circ \rtr_{\theta_{n-1}}: T_{\param_{n-1}}\mathcal{M} \rightarrow \R
\end{align}
obtained by precomposing the objective function $\varphi:\mathcal{M}\rightarrow \R$ with the retraction at $\param_{n-1}$. In this way, the Riemannian objective is now lifted to the Euclidean objective $\hat{\varphi}_{n}$ on the tangent space. Here we apply the usual MM strategy on the tangent space to find a descent direction, take a step on the tangent space, and retract back onto the manifold. We call this procedure `tangential MM', which is concisely stated below:
\begin{align}
			\hat{g}_{n} &\leftarrow \left[ \begin{matrix}\textup{Majorizing surrogate of $\hat{\varphi}_{n}$ s.t. $\hat{g}_{n}(\mathbf{0})=\hat{\varphi}(\mathbf{0})$}\end{matrix} \right]\vspace{0.3cm}  \nonumber \\
            V_{n} &\leftarrow \argmin_{\eta\in T_{\param_{n-1}}\mathcal{M}} \hat{g}_{n}(\eta) \label{eq:tMM} \\  
            \param_{n} &\leftarrow \rtr_{\param_{n-1}}\left(  \alpha_{n}V_{n}  \right). \nonumber
	\end{align}
To view the inexact RGD for smooth objectives \eqref{eq:RGD_inexact} as a special case of tMM \eqref{eq:tMM}, let 
\begin{align}\label{eq:intro_g_n}
    \hat{g}_n(\eta):= \varphi(\param_{n-1})+\langle \grad \varphi(\param_{n-1}),\eta\rangle + \frac{\lambda}{2}\|\eta\|^2
\end{align}
 where $\lambda\ge L$ and $L>0$ is the Lipschitz continuity parameter of $\nabla \varphi$. Then one can verify the update of tMM using \eqref{eq:intro_g_n} is the same as \eqref{eq:RGD_inexact} (see Section \ref{sec:brpl} for details).
 

In this work, we consider the more general setting when the manifold $\mathcal{M}$ is a product manifold given by $\mathcal{M} = \M^{(1)}\times\cdots\times\M^{(m)}$. Then problem \eqref{eq:def_CROPT} becomes a multi-block Riemannian constrained problem as follows:
\begin{align}\label{eq:def_CROPT_block}
		\min_{ \substack{\param=[\theta^{(1)},\dots,\theta^{(m)}] \\ \theta^{(i)} \in \Theta^{(i)} \subseteq \mathcal{M}^{(i)} \,\, \textup{for $i=1,\dots,m$}}  } \hspace{-0.7cm}\left(f(\param)= \varphi(\param) + \psi(\param) \right).
	\end{align} 

Given that the problem \eqref{eq:def_CROPT_block} is typically nonconvex, expecting an algorithm to converge to a globally optimal solution from an arbitrary initialization might not always be reasonable. Instead, our goal is to ensure global convergence to stationary points from any initialization. In certain problem classes, stationary points can be practically and theoretically as good as global optimizers  \cite{mairal2010online, sun2015nonconvex}. Additionally, determining the iteration complexity of such algorithms is crucial for both theoretical and practical purposes. This involves bounding the worst-case number of iterations needed to achieve an $\epsilon$-approximate stationary point (appropriately defined in Sec. \ref{sec:optimality_measures}).


In this work, we propose a block-extension of tMM in \eqref{eq:tMM} that can also handle additional (geodesically convex) constraints within each manifold $\mathcal{M}^{(i)}$ as well as nonsmooth nonconvex objectives (see Algorithm \ref{algorithm:BMM}).
 We carefully analyze tBMM in various settings and obtain first-order optimality guarantees and iteration complexity under inexact computations. From the general results, we can easily deduce Corollary \ref{cor:inexact_RGD}.
 
\subsection{Related Works}
 Under the Euclidean setting, i.e. when each $\M^{(i)}$ in \eqref{eq:def_CROPT_block} is a Euclidean space, the corresponding Euclidean block MM method has been well studied in the literature. For convex problems, the Euclidean block MM method is studied in \cite{xu2013block} with prox-linear surrogates, and in \cite{razaviyayn2013unified,hong2015unified} with general surrogates. For nonconvex problems, some variants of block MM are studied in some recent works, including BMM-DR in \cite{lyu2023block}, and BCD-PR in \cite{kwon2023complexity}. 

 Under the Riemannian setting, some recent work showed convergence and complexity of block MM methods for solving \eqref{eq:def_CROPT_block} when $\psi=0$. Namely, in \cite{gutman2023coordinate}, the authors established a sublinear convergence rate for an block-wise Riemannian gradient descent. However, the retraction considered there is restricted to the exponential map, which excludes many commonly used retractions in the literature. In \cite{peng2023block}, the authors established convergence and complexity results of general block MM on compact manifolds. In \cite{li2023convergence}, convergence and complexity results are established for the Riemannian block MM methods on general Riemannian manifolds. Moreover, extra constrained sets on the manifolds and inexact computation of subproblems are allowed in the general framework of \cite{li2023convergence}. However, all these analyses are limited to the smooth problem ($\psi=0$) and cannot be directly applied to the general problem \eqref{eq:def_CROPT_block}.

 For nonconvex nonsmooth problems, in \cite{chen2020proximal}, the authors studied a tangential type of MM method with prox-linear surrogates on Stiefel manifolds with complexity guarantees. However, the problem considered there is a single-block problem.

\subsection{Our Contributions}
In this work, we propose tBMM, which is a general framework of tangential type Riemannian block MM algorithms for solving nonconvex, nonsmooth, multi-block constrained Riemannian optimization problems \eqref{eq:def_CROPT_block}, and allowing inexact computation of subproblems. We thoroughly analyze tBMM \eqref{eq:tMM} and obtain asymptotic convergence to the set of stationary points and iteration complexity. The theoretical contributions of this work, compared to the aforementioned related work, lie especially in the following three aspects,
\begin{description}[itemsep=0.01cm]
    \item[(1)] (Iteration complexity) tBMM is applicable to nonsmooth, nonconvex, multi-block Riemannian optimization problems, and we derive the iteration complexity of $O(\eps^{-2})$ along with asymptotic convergence to the set of stationary points. See Theorem \ref{thm:RBMM_MtMt}.
    \item[(2)] (Constrained optimization) tBMM is applicable to constrained optimization problems on manifolds. Here, constrained optimization on manifolds means we allow the domain $\Param$ of the optimization problem to be a closed subset of the manifold, i.e. $\Param \subseteq \M$, which is not necessarily the entire manifold.
    \item[(3)] (Robustness) tBMM is robust in the face of inexact computations in each iteration. See \ref{assumption:A0_optimal_gap}\textbf{(ii)}.
\end{description}
tBMM entails various classical and practical algorithms. This includes inexact RGD \eqref{eq:RGD_inexact}, block Riemannian prox-linear updates (see Section \ref{sec:brpl}), and nonsmooth proximal gradient method on Stiefel manifolds (see Section \ref{sec:nonsmooth_st}). We apply our results to the above classical algorithms and obtain the following including empirical findings:
\begin{description}[itemsep=0.01cm]
    \item[(4)] The inexact RGD \eqref{eq:RGD_inexact} for smooth objectives converges to the set of stationary points and has iteration complexity of $O(\eps^{-2})$. See Corollary \ref{cor:inexact_RGD}.
    \item[(5)] We give a convergence and complexity result of $O(\eps^{-2})$ for nonsmooth proximal gradient method on Stiefel manifolds. See Section \ref{sec:nonsmooth_st}.
    \item[(6)] We empirically verified tBMM is faster than the classical algorithm on various problems, including nonnegative tensor decomposition with Riemannian constraints, regularized nonnegative matrix factorization, and low-rank matrix recovery. See Section \ref{sec:stylized_app}.
\end{description}

\subsection{Preliminaries and Notations}

The notations we use in this work are consistent with those of Riemannian optimization literature. In this section, we provide a brief introduction to the notations used in our paper, with further details provided in the Appendix \ref{sec:preliminaries_Riemannian}. We use $T_x \M$ or $T_x$ to denote the tangent space at $x\in\M$ and $\rtr_x$ to denote a retraction at $x$. Retractions provide a way to lift a function $g:\M\rightarrow \R$ onto the tangent space $T_{x}\M$ via its \textit{pullback} $\hat{g} :=g\circ \rtr_{x}:T_{x}\rightarrow \R$. Denote $\rinj(x)$ as the injectivity radius at $x$. For a subset $\Param\subseteq \mathcal{M}$ and $x\in \Param$, define the \textit{lifted constraint set} $T_{x}^{*}$ as
\begin{align}\label{eq:def_lift_constraints}
	\small T^{*}_{x}&:= \left\{ u\in T_{x} \,\left|\,  \begin{matrix}\textup{$\rtr_x(u)=x'$ for some}\\\textup{ $x'\in \Param$ with $d(x,x')\le r_0 /2$}\end{matrix} \right.\right\},
\end{align}
where $r_0$ is the lower bound of the injectivity radius (see \ref{assumption:Optn2}\textbf{(iii)}) and $d(x,x')$ is the geodesic distance between $x$ and $x'$. 

For block Riemannian optimization, we introduce the following notations: For $\param=[\theta^{(1)},\dots,\theta^{(m)}]$, 
	\begin{align}
		\small \grad_{i} f(\param)&:=\left[\begin{matrix}
		    \textup{Riemmanian gradient of}\\\textup{ $\theta \mapsto f(\theta^{(1)},\dots,\theta^{(i-1)}, \theta, \theta^{(i+1)},\dots,\theta^{(m)})$}\end{matrix}\right],\\ 
		\grad f(\param)&:=[ \grad_{1} f(\param),\dots, \grad_{m} f(\param) ].
	\end{align}
 Throughout this paper, we let $(\param_{n})_{n\ge 1}$  denote an output of Algorithm \ref{algorithm:BMM} and write $\param_{n}=[\theta_{n}^{(1)},\dots,\theta_{n}^{(m)}]$ for each $n\ge 1$. For each $n\ge 1$ and $i=1,\dots,m$, denote 
	\begin{align}\label{eq:def_f_n_marginal}
		f_{n}^{(i)}: \theta \mapsto f(\theta_{n}^{(1)},\dots,\theta_{n}^{(i-1)},\theta,\theta_{n-1}^{(i+1)},\dots,\theta_{n-1}^{(m)}),
	\end{align}
	which we will refer to as the $i$th marginal objective function at iteration $n$. 

\vspace{-0.4cm}
	\section{Algorithm}
	\label{sec:Algorithm}
	
	In Algorithm \ref{algorithm:BMM}, we give a precise statement of the tBMM algorithm we stated in high-level at \eqref{eq:tMM}.  We first define majorizing surrogate functions on Riemannian manifolds. 
	\begin{definition}[Tangential surrogates on Riemannian manifolds]
		\label{def:majorizing_surrogates}
		Fix a function $h:\mathcal{M}\rightarrow \R$ and  $\param\in \M$. Denote the \textit{pullback} $\hat{h}:=h\circ \rtr_{\param}:T_{\param}\M\rightarrow \R$. 
		A function $\hat{g}:T_{\param}\M \rightarrow \R$ is a \textit{tangential surrogate} of $h$ at $\param$ if 
  \vspace{-0cm}
		\begin{align}
			\hat{g}(\eta) \ge \hat{h}(\eta) \quad \textup{for all $\eta\in T_{\param}\M$} \quad \textup{and} \quad \hat{g}(\mathbf{0})=\hat{h}(\mathbf{0}). 
		\end{align}
	\end{definition}
	
	\vspace{-0.2cm}
	If $\hat{g}:T_{\param} \M\rightarrow \R$ is a tangential surrogate of $h:\M\rightarrow \R$ and if $\hat{g}$, $h$ are differentiable, then  $\nabla \hat{g}(\mathbf{0})=\hat{h}(\mathbf{0})=\grad h(\param)$. The high-level idea of tBMM is the following. In order to update the $i$th block of the parameter $\theta^{(i)}_{n}$ at iteration $n$, we use first minimize a tangential majorizer of the pullback of the $i$th block objective function, take a step in the resulting direction in the tangent space, and then use retraction to get back to the manifold. We remark that another formulation of Riemannian BMM \cite{li2023convergence} uses majorizers on the manifold without using the tangent spaces. In fact, if we have a majorizer on the manifold, then its pullback is indeed a tangential majorizer. These two formulations of Riemannian BMM coincides on Euclidean spaces but not in general on non-Euclidean manifolds. See the details in Appendix \ref{sec:app_majorizer}.

	\begin{algorithm}
		
		\caption{Tangential Block Majorization-Minimization (tBMM) } 
		\label{algorithm:BMM}
		\begin{algorithmic}[1]
			\STATE \textbf{Input:} $\param_{0}=(\theta_{0}^{(1)},\cdots,\theta_{0}^{(m)})\in \Theta^{(1)}\times \cdots \times \Theta^{(m)}$ (initial estimate); $N$ (number of iterations); $\alpha\in [0,1]$ (step size)  
			
			\FOR{ $n=1,\dots,N$} 
   \STATE Update estimate $\param_{n}=[\theta_{n}^{(1)},\cdots, \theta_{n}^{(m)}]$ by  
			\FOR{ $i=1,\cdots,m$} 
			\STATE {\small $\displaystyle f_{n}^{(i)}(\cdot):= f\left(\theta_{n}^{(1)},\cdots,\theta_{n}^{(i-1)}, \,  \cdot \,\,  ,\theta_{n-1}^{(i+1)},\cdots, \theta_{n-1}^{(m)}\right)$}
            \vspace{-0.3cm}
			 \begin{align}
				&\hspace{-0.5cm}\begin{cases}\label{eq:tMM_MtMt}
					&\hspace{-0.3cm}\hat{g}_{n}^{(i)} \leftarrow \left[ \begin{matrix}\textup{tangential surrogate of $\varphi_{n}^{(i)}$ at $\theta_{n-1}^{(i)}$}\end{matrix} \right] \vspace{0.1cm}\\
                        &\hspace{-0.3cm} G_{n}^{(i)}(\eta):= \hat{g}_{n}^{(i)}(\eta)+\psi_n^{(i)}(\theta_{n-1}^{(i)}+\eta)\vspace{0.2cm}\\
					&\hspace{-0.3cm}V_{n}^{(i)}\in \argmin_{\eta\in T_{\theta_{n-1}^{(i)}}^{*} } G_{n}^{(i)}(\eta) \vspace{-0.1cm}\\
	               &\hspace{-0.3cm}\textup{$\alpha_{n}^{(i)}\leftarrow \alpha$}; \\ 
                    &\hspace{-0.3cm}\textup{(Optionally, $\alpha_{n}^{(i)}\leftarrow$ line search using Alg. \ref{alg:R_line_search} )} \vspace{0.1cm}\\
					&\hspace{-0.3cm}\theta_{n}^{(i)} = \rtr_{\theta_{n-1}^{(i)}}\left(\alpha_{n}^{(i)} V_{n}^{(i)}\right)
				\end{cases}
			\end{align}			
			\ENDFOR
			\ENDFOR
			\STATE \textbf{output:}  $\param_{N}$ 
		\end{algorithmic}
	\end{algorithm}

	Below we give some remarks on the computational aspects of Algorithm \ref{algorithm:BMM}. For Algorithm \ref{algorithm:BMM}, the step size can be simply set as $1$ since the lifted constraint set already restricts the length of tangent vectors (see definition in \eqref{eq:def_lift_constraints}). However, computing the lifted constraint set requires access to specific information about the manifold (see Section \ref{sec:pf_lemma} for details). Alternatively, one can minimize the tangent surrogate over the `tangent cone' (see Appendix \ref{sec:preliminaries_Riemannian}) and apply Riemannian line search in Algorithm \ref{alg:R_line_search} to help determine a step size, although it requires additional computational cost.

    \vspace{-0.2cm}
	\section{Statement of Results}\label{sec:results}

	Here we state our main results concerning the convergence and complexity of our tBMM algorithm (Alg. \ref{algorithm:BMM}) for the constrained block Riemannian optimization problem in \eqref{eq:def_CROPT_block}. 
	
	\vspace{-0.2cm}
	\subsection{Optimality and Complexity Measures} 
	\label{sec:optimality_measures}

	For iterative algorithms, a 
	first-order optimality condition is unlikely to be satisfied exactly in a finite number of iterations, so it is more important to know how the worst-case number of iterations required to achieve an $\eps$-approximate solution scales with the desired precision $\eps$. 
	More precisely, for the multi-block problem \eqref{eq:def_CROPT_block}, we say $\param_{n}=[\theta_{n}^{(1)},\dots,\theta_{n}^{(m)}]\in \Param$ generated by Algorithm \ref{algorithm:BMM} is an \textit{$\eps$-stationary point} of $f=\varphi+\psi$ over $\Param$ if 
 \begin{align}\label{eq:stationary_approximate}
    \sum_{i=1}^{m} \, \left( \begin{matrix}  -  \inf_{u \in T_{\theta_{n}^{(i)}}^{*},\|u\|\le 1 } \, \biggl\langle \grad_{i} \varphi(\param_{n})+ \\\left. \Proj_{T_{\theta_{n}^{(i)}}}\partial \psi_n^{(i)}(\theta^{(i)}_{n}+V_n^{(i)}) ,\, \frac{u}{\hat{r}}  \right\rangle \end{matrix}  \right)\le \eps ,
	\end{align}	
 where $\hat{r}=\min\{r_0,1\}$, i.e. the minimum of $1$ and the lower bound of injectivity radius $r_0$; $V_{n}^{(i)}\in \argmin_{\eta\in T_{\theta_{n-1}^{(i)}}^{*} } \left(G_{n}^{(i)}(\eta):= \hat{g}_{n}^{(i)}(\eta)+\psi_n^{(i)}(\theta_{n-1}^{(i)}+\eta)\right)$. When the objective function is smooth ($\psi=0$), \eqref{eq:stationary_approximate} reduces to the optimality measure for Riemannian constrained optimization problems used in \cite{li2023convergence}. 
    Furthermore, in the Euclidean setting, \eqref{eq:stationary_approximate} reduces to the optimality measure used in \cite{lyu2022convergence,lyu2023block} for constrained nonconvex smooth optimization and is also equivalent to Def.1 in \cite{nesterov2013gradient} for smooth objectives.  
	
	In the unconstrained block-Riemannian setting where  $\Theta^{(i)}=\mathcal{M}^{(i)}$ for $i=1,\dots,m$, the above  \eqref{eq:stationary_approximate} becomes 
	\begin{align}\label{eq:stationary_option2}
	\hspace{-0.5cm}	\sum_{i=1}^{m} \, \lVert \grad_{i} \varphi(\param_{n}) + \Proj_{T_{\theta_{n}^{(i)}}}\partial \psi_n^{(i)}(\theta^{(i)}_{n}+V_n^{(i)}) \rVert \le \eps. 
	\end{align}	
	In the case of single-block $m=1$, the above is consistent with the optimality measure discussed in \cite{chen2020proximal,yang2014optimality}. When the nonsmooth part $\psi=0$, this becomes the standard definition of $\eps$-stationary points for unconstrained Riemannian optimization problems.

	Next, for each $\eps>0$ we define the \textit{(worst-case) iteration complexity} $N_{\eps}$ of an algorithm computing $(\param_{n})_{n\ge 1}$ for solving \eqref{eq:def_CROPT_block} as 
	\begin{align}\label{eq:Neps}
 \small \hspace{-0.5cm}
		N_{\eps}:= \sup_{\param_{0}\in \Param} \inf\, \left\{ n\ge 1 \,\left|\, \begin{matrix}\text{$\param_{n}$ is an $\eps$-approximate} \\ \text{stationary point of $f$ over $\Param$}\end{matrix}\right. \right\}, 
	\end{align}
	where $(\param_{n})_{n\ge 0}$ is a sequence of estimates produced by the algorithm with an initial estimate $\param_{0}$. Note that $N_{\eps}$ gives the \textit{worst-case} bound on the number of iterations for an algorithm to achieve an $\eps$-approximate solution due to the supremum over the initialization $\param_{0}$ in \eqref{eq:Neps}.

	\subsection{Statement of Results}

	In this section, we consider solving the minimization problem \eqref{eq:def_CROPT_block} using Algorithm \ref{algorithm:BMM}, utilizing tangent spaces and retraction for the majorization and minimization steps. One advantage of tBMM is that it utilizes tangent spaces and retractions so that one can bypass handling Riemannian geometry directly. 
	
	We start with stating some general assumptions. We allow inexact computation of the solution to the minimization sub-problems in Algorithm \ref{algorithm:BMM}. This is practical since the minimization of the tangential surrogates may not always be exactly solvable. To be precise, for each $n \geq 1$, we define the \textit{optimality gap} $\Delta_n$ by
	\begin{equation}\label{eq:def_optimality_gap}
		\Delta_n :=
			\max _{1 \leq i \leq m}\bigg(G_n^{(i)}(V_n^{(i)})-\inf _{\eta \in T^{*}_{\theta_{n-1}^{(i)}}} G_n^{(i)}(\eta)\bigg).
	\end{equation}
	For the convergence analysis to hold, we require that the optimal gaps decay fast enough so that they are summable. See Assumption \ref{assumption:A0_optimal_gap}.

	\begin{customassumption}{(A1)}
		\label{assumption:A0_optimal_gap}
		For Alg. \ref{algorithm:BMM}, we make the following assumptions: 
  \vspace{-0.2cm}
		\begin{description}[itemsep=0.01cm]
			\item[(i)] (Objective) The smooth part of objective $\varphi:\Param=\prod_{i=1}^{m} \Theta^{(i)} \rightarrow \R$ is (block-wise) continuously differentiable and the possibly nonsmooth part $\psi$ is convex and Lipschitz continuous with parameter $L_\psi$. The values of objective function $f$ are uniformly lower bounded by some $f^{*}\in \R$. Furthermore, the sublevel sets $f^{-1}((-\infty, a))=$ $\{\boldsymbol{\theta} \in \boldsymbol{\Theta}: f(\boldsymbol{\theta}) \leq a\}$ are compact for each $a \in \mathbb{R}$. 
			
			\item[(ii)] (Inexact computation) The optimality gaps $\Delta_n$ in \eqref{eq:def_optimality_gap} are summable, that is, $\sum_{n=1}^{\infty} \Delta_n<\infty$. 
                \item[(iii)] (Retraction) The retraction $\rtr$ satisfies the following: There exists $M_2 >0$ such that for all $\theta\in \Theta^{(i)}$ and $V\in T_{\theta}\mathcal{M}^{(i)}$,
                \begin{align}
                    \lVert \rtr_{\theta}(V) - (\theta+V) \rVert \le M_{2} \lVert V \rVert^{2} .
                \end{align} 
                
		\end{description}
	\end{customassumption}
 \vspace{-0.2cm}
Note \ref{assumption:A0_optimal_gap}\textbf{(iii)} is a common assumption in the literature of Riemannian optimization, which is used in e.g. \cite{absil2007trust,absil2006convergence,boumal2019global,liu2019quadratic,chen2020proximal}. This assumption is trivially satisfied in Euclidean spaces since the retraction becomes the identity there. Furthermore, \ref{assumption:A0_optimal_gap}\textbf{(iii)} holds when the constrained set $\Theta^{(i)}$ is compact, see Prop. \ref{prop:cpt_manifold_in_Euclidean}.

\vspace{-0.2cm}

\begin{figure}[H]
  \includegraphics[width=.4\linewidth]{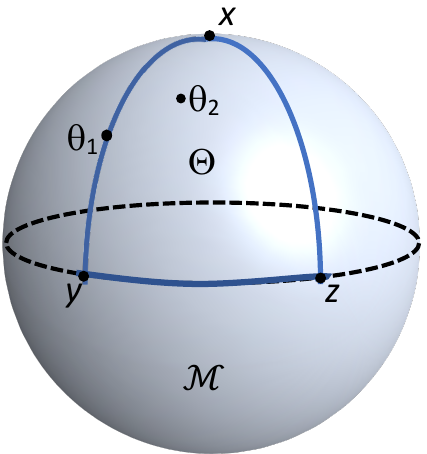}
  \includegraphics[width=.49\linewidth]{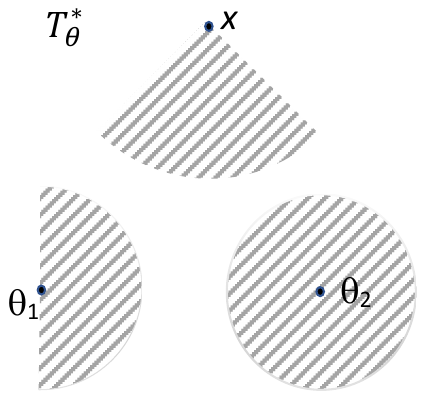}
  \vspace{-0.1cm}
\caption{Illustration of the lifted constraint set $T^{*}_{\theta}$.}
\label{fig:lifted_set}
\end{figure}
		
		\vspace{-0.5cm} 
		For our analysis we impose the following assumption. 
		
		\begin{customassumption}{(A2)}
			\label{assumption:Optn2}
			For Algorithm \ref{algorithm:BMM}, we assume the following: Each tangential surrogate $\hat{g}_{n}^{(i)}:\M^{(i)}\rightarrow \R$ of $\varphi_{n}^{(i)}$ at $\theta_{n-1}^{(i)}$ is differentiable and satisfies the following strong convexity and restricted smoothness properties: 
   \vspace{-0.2cm}
			\begin{description}[itemsep=-0.5cm]				\item[(i)] (Strong convexity) There exists a constant $\rho_{n}^{(i)}>0$ such that for all $\eta,\xi\in T_{\param}$,
    \vspace{-0.5cm}
				\begin{align}
					\hat{g}_{n}^{(i)}(\xi+\eta)  \ge \hat{g}_{n}^{(i)}(\xi) + \langle \nabla \hat{g}_{n}^{(i)}(\xi) ,\, \eta  \rangle - \frac{\rho_{n}^{(i)}}{2}\lVert \eta \rVert^{2}. 
				\end{align}

				\item[(ii)] (Restricted smoothness) There exists a constant $L_{n}^{(i)}>0$ such that for all $\eta\in T_{\param}$,
			\vspace{-0.5cm}	\begin{align}\label{eq:g_surrogate_g_smooth}
					| \hat{g}_{n}^{(i)}(\eta) - \hat{g}_{n}^{(i)}(\mathbf{0}) - \langle \nabla \hat{g}_{n}^{(i)}(\mathbf{0})  ,\, \eta \rangle| \le \frac{L_{n}^{(i)}}{2} \lVert \eta \rVert^{2}.
				\end{align} 
			\end{description}\vspace{-0.7cm}
			\begin{description}
				\item[(iii)] Each $\Theta^{(i)}\subseteq \mathcal{M}^{(i)}$ 
				is (geodesically) strongly convex and there exists a uniform lower bound $r_{0}>0$ for $\rinj(x)$ over $x\in \Theta^{(i)}$. Moreover, the lifted constraint set $T^*_{\param^{(i)}}$ (see \eqref{eq:def_lift_constraints}) is convex for each $\theta^{(i)}\in \Theta^{(i)}$.
			\end{description}
		\end{customassumption}

\vspace{-0.5cm} 
The assumption of convexity of the lifted constraint set $T^*_{\param^{(i)}}$ in \ref{assumption:Optn2}\textbf{(iii)} makes the sub-problem of finding $V_n^{(i)}$ in \eqref{eq:tMM_MtMt} a convex problem, which is easy to solve using classical methods \cite{boyd2004convex,nesterov1998introductory,nesterov2018lectures}. For Euclidean optimization problems, i.e. $\M^{(i)}=\R^{I_n}$ with $I_n\in \mathbb{Z}_+$, the tangent spaces are identical to $\R^{I_n}$. Hence $T^*_{\param^{(i)}}=\Theta^{(i)}$, whose convexity is already guaranteed by the first part of \ref{assumption:Optn2}\textbf{(iii)}. More generally, for unconstrained optimization problems on Hadamard manifolds (see Section \ref{sec:examples_manifold}) which include Euclidean spaces, hyperbolic spaces, manifolds of positive definite matrices, when taking $\Exp$ as the retraction in \eqref{eq:def_lift_constraints}, we have $T^*_{\param^{(i)}}=T_{\param^{(i)}}$ since the exponential map is a global diffeomorphism on Hadamard manifolds (see Theorem 4.1, p.221 in \cite{sakai1996riemannian}). Hence the convexity of $T^*_{\param^{(i)}}$ is guaranteed without further assumptions in this case. In fact, for the unconstrained problem on any Riemannian manifolds, when taking $\Exp$ as the retraction in \eqref{eq:def_lift_constraints}, which is the same setting as in \cite{gutman2023coordinate}, $T^*_{\param^{(i)}}$ becomes a ball of radius $r_0$ centered at $\param^{(i)}$ so the convexity is also guaranteed. In Figure \ref{fig:lifted_set}, an illustration of $T^*_{\param}$ that satisfies \ref{assumption:Optn2}\textbf{(iii)} is shown. Additional details about this concrete example can be found in Appendix \ref{sec:lifted_set}.

		Our main result for Algorithm \ref{algorithm:BMM} is stated in Theorem \ref{thm:RBMM_MtMt}.

		\begin{theorem}[Convergence of tBMM]\label{thm:RBMM_MtMt}
			Let $f$ denote the objective function in \eqref{eq:def_CROPT_block} with $m\ge 1$. 
			Let $(\param_{n})_{n\ge 0}$ be an output of Algorithm \ref{algorithm:BMM}. Suppose \ref{assumption:A0_optimal_gap} and \ref{assumption:Optn2} hold. Let $\hat{r}=\min\{1,r_0\}$.
			Then the following hold. 
   \vspace{-0.3cm}
			\begin{description}[itemsep=-0.1cm]
				\item[(i)] Suppose there exist constants $\rho, L>0$ such that for all $n\ge 1$ and $i=1,\dots, m$,  $\rho_{n}^{(i)}\ge \rho$, $L_{n}^{(i)}\le L$, and $\alpha_{n}^{(i)}\in \left(0 ,\, \alpha  \right]$ with $\alpha = \rho/(\rho+2L_\psi M_2)$. Let $M=f(\param_{0}) - f^{*} + m\sum_{n=1}^{\infty} \Delta_{n}$. Then for $N\ge \frac{L^2}{(L+C')^2}\frac{2M}{\rho\alpha}$, 

    {\small
		\vspace{-0.3cm}	\begin{align}\label{eq:tMM_MtMt_complexity1}
            &\begin{matrix}
			\displaystyle\hspace{-1cm}    \min_{1\le k \le N} \sum_{i=1}^{m}   - \inf_{\eta \in T^*_{\theta^{(i)}_{k-1}},\|\eta\|\le 1} \biggl \langle \grad \varphi_k^{(i)}(\theta^{(i)}_{k-1})+\\
       \displaystyle\hspace{0.5cm} \left.\Proj_{T_{\theta^{(i)}_{k-1}}} \partial \psi_k^{(i)}(\theta^{(i)}_{k-1}+V_k^{(i)}),\frac{\eta}{\min\{r_0,1\}}\right\rangle \end{matrix} 
       \end{align}
       \vspace{-0.5cm}
       \begin{align}
       \le \frac{L +C}{\min\{r_0,1\}}\sqrt{\frac{2M}{ \rho \alpha N}}.
		\end{align}	
	}
				\item[(ii)] In addition to the hypothesis in \textbf{\textup{(i)}}, further assume that $\psi=0$. 
    Then every limit point of $(\param_{n})_{n\ge 0}$ is a stationary point of \eqref{eq:def_CROPT_block}. 
				
				\item[(iii)] In addition to the hypothesis in \textbf{\textup{(i)}}, further assume  $\varphi$ is joint smooth (Def. \ref{def:joint_smooth}). Let $M'=(4\alpha+6) m\sum_{n=1}^{\infty}\Delta_n + 6(f(\param_{0}) - f^{*}) $. Then for each $N\ge \frac{L^2}{(L+C'+L')^2}\frac{M^2}{M'\rho\alpha}$, 
				\begin{align}
    \label{eq:tMM_MtMt_complexity2}
			\vspace{-0.3cm}&\begin{matrix}\displaystyle \hspace{-1cm}\min_{1\le k \le N} \sum_{i=1}^{m}   - \inf_{\eta \in T^*_{\theta^{(i)}_{k-1}},\|\eta\|\le 1}\biggl\langle \grad_i \varphi(\param_{k-1}) +\\ \displaystyle\hspace{0.7cm}\left.\Proj_{T_{\theta^{(i)}_{k-1}}} \partial \psi_k^{(i)}(\theta^{(i)}_{k-1}+V_k^{(i)}),\frac{\eta}{\min\{r_0,1\}}\right\rangle \end{matrix}\\
   &\hspace{0cm}\le \frac{2(L+C+ L')}{\min\{r_0,1\}} \sqrt{\frac{M'}{\rho \alpha N}},
		\end{align}
  In particular, the worst case iteration complexity is $N_{\eps}=O(\eps^{-2})$. 
			\end{description}
		\end{theorem}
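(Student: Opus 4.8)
The backbone of the argument is a per-block sufficient-decrease inequality, established at each inner step of Algorithm~\ref{algorithm:BMM}. Fix an iteration $n$ and block $i$ and abbreviate $V=V_n^{(i)}$, $\alpha=\alpha_n^{(i)}$. Since $\hat{g}_n^{(i)}$ is a tangential surrogate of the marginal $\varphi_n^{(i)}$ at $\theta_{n-1}^{(i)}$, majorization together with $\theta_n^{(i)}=\rtr_{\theta_{n-1}^{(i)}}(\alpha V)$ gives $\varphi_n^{(i)}(\theta_n^{(i)})\le \hat{g}_n^{(i)}(\alpha V)$. For the nonsmooth term I would compare $\psi_n^{(i)}(\theta_n^{(i)})$ with $\psi_n^{(i)}(\theta_{n-1}^{(i)}+\alpha V)$: the retraction bound \ref{assumption:A0_optimal_gap}\textbf{(iii)} controls $\lVert \rtr_{\theta_{n-1}^{(i)}}(\alpha V)-(\theta_{n-1}^{(i)}+\alpha V)\rVert\le M_2\alpha^2\lVert V\rVert^2$, so the $L_\psi$-Lipschitz continuity of $\psi$ yields $\psi_n^{(i)}(\theta_n^{(i)})\le \psi_n^{(i)}(\theta_{n-1}^{(i)}+\alpha V)+L_\psi M_2\alpha^2\lVert V\rVert^2$. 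Adding the two estimates bounds $f_n^{(i)}(\theta_n^{(i)})$ by $G_n^{(i)}(\alpha V)+L_\psi M_2\alpha^2\lVert V\rVert^2$, where $G_n^{(i)}(\mathbf{0})=f_n^{(i)}(\theta_{n-1}^{(i)})$ by tightness.

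Next I would use the strong convexity of $G_n^{(i)}$ from \ref{assumption:Optn2}\textbf{(i)} on the convex lifted set \ref{assumption:Optn2}\textbf{(iii)}, together with the $\Delta_n$-inexactness of $V$, to bound $G_n^{(i)}(\alpha V)$ against $G_n^{(i)}(\mathbf{0})$. The fractional-step convexity estimate produces a decrease of order $-\tfrac{\rho\alpha}{2}\lVert V\rVert^2$ plus an $O(\Delta_n)$ error, and the choice $\alpha=\rho/(\rho+2L_\psi M_2)$ is exactly what makes the strong-convexity gain dominate the retraction-curvature loss $L_\psi M_2\alpha^2\lVert V\rVert^2$. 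Summing over blocks $i$, telescoping $f_n^{(i)}$ across the inner loop, then summing over $n$ and invoking the lower bound $f^*$ and summability of $\Delta_n$ (\ref{assumption:A0_optimal_gap}) yields $\sum_{n}\sum_{i}\lVert V_n^{(i)}\rVert^2\le 2M/(\rho\alpha)$ with $M=f(\param_0)-f^*+m\sum_n\Delta_n$.

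The second pillar bounds the stationarity measure by $\lVert V\rVert$. The (approximate) optimality of $V_n^{(i)}$ for the convex subproblem $\min_{T^*}G_n^{(i)}$ yields a variational inequality for $\nabla\hat{g}_n^{(i)}(V_n^{(i)})+\Proj\partial\psi$; I would replace $\nabla\hat{g}_n^{(i)}(V_n^{(i)})$ by $\nabla\hat{g}_n^{(i)}(\mathbf{0})=\grad\varphi_n^{(i)}(\theta_{n-1}^{(i)})$ at a cost of $L\lVert V_n^{(i)}\rVert$ via restricted smoothness \ref{assumption:Optn2}\textbf{(ii)}, normalize by $\hat{r}=\min\{r_0,1\}$ to match \eqref{eq:stationary_approximate}, and absorb the inexactness and curvature into the constant $C$. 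This gives that the measure at $\theta_{k-1}^{(i)}$ is at most $\tfrac{L+C}{\hat{r}}\lVert V_k\rVert$, where $\lVert V_k\rVert^2=\sum_i\lVert V_k^{(i)}\rVert^2$. Combining with the pigeonhole bound $\min_{k\le N}\lVert V_k\rVert^2\le \tfrac{2M}{\rho\alpha N}$ proves part \textbf{(i)}, and reading off $N_\eps=O(\eps^{-2})$ is immediate. Part \textbf{(ii)} then follows because $\sum_n\lVert V_n\rVert^2<\infty$ forces $\lVert V_n\rVert\to 0$; along any convergent subsequence $\param_{n_k}\to\param_*$ (which exists by compactness of the sublevel sets in \ref{assumption:A0_optimal_gap}\textbf{(i)}) the increment $\theta_n-\theta_{n-1}\to\mathbf{0}$, so $\theta_{n-1}$ and $\theta_n$ share the limit and continuity of $\grad\varphi$ transfers the vanishing measure to $\param_*$.

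For part \textbf{(iii)}, joint smoothness (Def.~\ref{def:joint_smooth}) lets me replace the block-marginal gradient $\grad\varphi_k^{(i)}(\theta_{k-1}^{(i)})$, evaluated with blocks $1,\dots,i-1$ already updated, by the full-vector gradient $\grad_i\varphi(\param_{k-1})$ at the single point $\param_{k-1}$, the discrepancy being controlled by the aggregate block movement $\sum_j\lVert V_k^{(j)}\rVert$; folding this extra term into the telescoping bookkeeping is what produces the modified constant $M'$ and the extra Lipschitz factor $L'$. The main obstacle I anticipate is precisely this second pillar: faithfully translating the inexact variational inequality for the surrogate-plus-nonsmooth subproblem over the lifted, geodesically-derived convex set $T^*_{\theta_{k-1}^{(i)}}$ into the intrinsic Riemannian stationarity measure, while simultaneously tracking the retraction curvature $M_2$, the projection of $\partial\psi$ onto the tangent space at the shifted point $\theta_{k-1}^{(i)}+V_k^{(i)}$, the inexactness $\Delta_k$, and the normalization by $\hat{r}$; for \textbf{(iii)} the added difficulty is the uniform control of the cross-block gradient error through joint smoothness.
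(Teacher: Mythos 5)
Your overall architecture coincides with the paper's: a per-block inexact descent lemma (tangential majorization, the $L_\psi M_2\alpha^2\lVert V\rVert^2$ retraction/Lipschitz correction, and strong convexity of $G_n^{(i)}$ on the convex lifted set), telescoping against $f^*$ and $\sum_n\Delta_n$, a variational inequality for the convex subproblem combined with restricted smoothness to trade $\nabla\hat g_n^{(i)}(V)$ for $\grad\varphi_n^{(i)}(\theta_{n-1}^{(i)})$ at cost $L\lVert V\rVert$, the vanishing-increment plus continuity argument for part \textbf{(ii)}, and joint smoothness to control the cross-block gradient discrepancy for part \textbf{(iii)}.

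There is, however, one concrete step that fails as written. You claim the descent estimate produces a decrease of order $-\tfrac{\rho\alpha}{2}\lVert V\rVert^2$ at the step size $\alpha=\rho/(\rho+2L_\psi M_2)$, and hence that telescoping yields $\sum_n\sum_i\lVert V_n^{(i)}\rVert^2\le 2M/(\rho\alpha)$. If you carry out the convexity interpolation $G(\alpha V)\le \alpha G(V)+(1-\alpha)G(\mathbf{0})+\tfrac{\rho\alpha(\alpha-1)}{2}\lVert V\rVert^2$ and then add the retraction-curvature loss $L_\psi M_2\alpha^2\lVert V\rVert^2$, the coefficient multiplying $-\lVert V\rVert^2$ is $\tfrac{\alpha}{2}\bigl(\rho-(\rho+2L_\psi M_2)\alpha\bigr)$, which is exactly zero at that critical step size: the strong-convexity gain and the curvature loss cancel rather than the former dominating. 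What survives in the paper's descent lemma is the term $-\tfrac{\rho\alpha}{2}\lVert V^{\star}\rVert^2$ coming from first-order optimality of the \emph{exact} subproblem minimizer ($G(\mathbf{0})-G(V^{\star})\ge\tfrac{\rho}{2}\lVert V^{\star}\rVert^2$), so the telescoping controls only $\sum_n\lVert V_n^{(i\star)}\rVert^2\le 2M/(\rho\alpha)$, not the inexact iterates you actually take. Recovering control of $\lVert V_n^{(i)}\rVert$ requires the additional inequality $\tfrac{\rho}{2}\lVert V_n^{(i)}-V_n^{(i\star)}\rVert^2\le\Delta_n$ (strong convexity plus optimality of $V^{\star}$ applied to the inexact solution); this is precisely what generates the enlarged constant $M'=(4\alpha+6)m\sum_n\Delta_n+6(f(\param_0)-f^*)$ in part \textbf{(iii)}, rather than the cross-block gradient error alone as you suggest. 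The $O(\eps^{-2})$ rate is unaffected, but your stated constant for $\sum\lVert V_n^{(i)}\rVert^2$ is not obtainable from the telescoping alone, and the stationarity chain in part \textbf{(i)} must be run with $V_k^{(i\star)}$ throughout (as the paper does) or else the constants must be inflated accordingly.
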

	\vspace{-0.1cm}	
		Note that \eqref{eq:tMM_MtMt_complexity1} in Theorem \ref{thm:RBMM_MtMt} \textbf{(i)} gives a type of iteration complexity result but the measure of optimality (i.e., the left-hand side of \eqref{eq:tMM_MtMt_complexity1}) is adapted to cyclic block optimization algorithms. In order to relate this to the more generic optimality measure in \eqref{eq:stationary_option2}, we need to bound the difference between $\grad \varphi_{k}^{(i)}(\theta_{k-1}^{(i)})$ and $\grad_{i} \varphi(\param_{k-1})$. This can be achieved by the additional smoothness property of $\varphi$ as in \eqref{eq:thm_MtMt_smoothness} and the resulting iteration complexity result is stated in \eqref{eq:tMM_MtMt_complexity2} in Theorem \ref{thm:RBMM_MtMt} \textbf{(iii)}. We put the proof of Theorem \ref{thm:RBMM_MtMt} along with some key lemmas in Appendix \ref{sec:pf_lemma}.

	\section{Applications}
	\label{sec:examples}

	In this section, we discuss some examples of tBMM and its connection to some other classical algorithms.

 \subsection{Example Manifolds}\label{sec:examples_manifold}

    In this section, we list several examples of manifolds that appear in various machine-learning problems. 

    \begin{example}[Stiefel manifold]
		\label{eg:stiefel}
		\normalfont 
		
		The Stiefel manifold $\mathcal{V}^{n\times k}$ is the set of all orthonormal $k$-frames in $\mathbb{R}^n$. Namely, it is the set of all  ordered orthonormal $k$-tuples of vectors in $\R^{n}$, i.e. $\mathcal{V}^{n\times k}=\left\{A \in \mathbb{R}^{n \times k}: A^T A=I_k\right\}$, 
		where $I_k$ is the $k\times k $ identity matrix. For $X\in \R^{n\times k}$, let $X=U\Sigma V^{T}$ be the SVD, then the projection of $X$ onto $\mathcal{V}^{n\times k}$ is given by $\textup{Proj}_{\mathcal{V}^{n\times k}}(X)=UV^{T}$.
		
	\end{example}

    \begin{example}[Manifold of fixed-rank matrices]
		\label{eg:fixed-rank}
		\normalfont
		The set $\mathcal{R}_r=\left\{X \in \mathbb{R}^{n \times m}: \operatorname{rank}(X)=r\right\}$ is a smooth submanifold of $\R^{n \times m}$. For $X\in \R^{n\times m}$, let $X=U\Sigma V^{T}$ be the SVD, and let $u_i$ and $v_i$ be the $i$-th column of $U$ and $V$, respectively. Write the singular values in $\Sigma$ in nonincreasing order, $\sigma_1(X) \geq \sigma_2(X) \geq \cdots \geq \sigma_{\min \{n, m\}}(X) \geq 0$. Then the projection of $X$ onto $\mathcal{R}_r$ is given by
		\begin{equation}
			\textup{Proj}_{\mathcal{R}_r}(X)= \sum_{i=1}^{r}\sigma_{i}(X)u_i v_i^{T} := U\Sigma_{r} V^{T}.
		\end{equation}
	\end{example}

    \vspace{-0.2cm}
    Another class of manifolds that is widely studied in the literature is the Hadamard manifolds, which includes many commonly encountered manifolds. \textit{Hadamard manifolds} are complete and simply connected Riemannian
	manifolds with nonpositive sectional curvature ( \cite{Burago2001ACI} and \cite{burago1992alexandrov}). The injectivity radius at every point is infinity on a Hadamard manifold. Examples of Hadamard manifolds can be found in Appendix \ref{sec:Had_eg}. 

		\subsection{Block Riemannian Prox-linear Updates and Block Riemannian GD}
		\label{sec:brpl}

		A primary approach in tBMM is to use the Riemannian prox-linear surrogates. We begin our discussion from the single-block case for \eqref{eq:def_CROPT} without constraints (i.e., $\Param=\M$) for simplicity. Given the iterate $\param_{n-1}\in \M$, the Riemannian prox-linear surrogate $\hat{g}:T_{\param_{n-1}}\M\rightarrow \R$  takes the form 
		\begin{align}\label{eq:Riemannian_prox_linear_single}
			\hspace{-0.5cm}\hat{g}_{n}(\eta) := f(\param_{n-1}) + \langle \grad f (\param_{n-1}),\, \eta \rangle + \frac{\lambda}{2} \lVert \eta\rVert^{2},
		\end{align}
		where $\lambda>0$ is the proximal regularization parameter. Note that $\hat{g}$ is defined only on the tangent space at $\param_{n-1}$ and not on the manifold. In order for $\hat{g}_{n}$ to be a tangential surrogate of $f$ at $\param_{n-1}$, one can impose the following restricted smoothness property for the pullback objective $\hat{f}:=f\circ \rtr_{\param_{n-1}}$: For some constant $L>0$, 
		\begin{align}\label{eq:f_Lipschitz_type_grad}
			\left| \hat{f}(\eta) - \hat{f}(\mathbf{0}) - \langle \grad f(\param_{n-1}),\, \eta \rangle \right| \le \frac{L}{2} \lVert \eta \rVert^{2}. 
		\end{align}
		The above property was introduced in \cite{boumal2019global} under the name of `restricted Lipschitz-type gradients', and was critically used to analyze Riemannian gradient descent and Riemannian trust region methods. Hence under \eqref{eq:f_Lipschitz_type_grad} and assuming $\lambda\ge L$, $\hat{g}_{n}$ in \eqref{eq:Riemannian_prox_linear_single} is indeed a tangential majorizer of $f$ at $\param_{n-1}$. The resulting tBMM with the Riemannian prox-linear surrogate above reads as 
		\begin{align}
			\param_{n}\leftarrow \rtr_{\param_{n-1}}\left( -\frac{1}{\lambda}\grad f(\param_{n-1}) \right), 
		\end{align}
		which is the standard Riemannian gradient descent with a fixed step size $1/\lambda \le 1/L$. Our convergent result Theorem \ref{thm:RBMM_MtMt} holds for the above updates whenever $\lambda\ge L$. 
		
		Next, when we further impose an additional strongly convex constraint $\Param\subseteq \M$, the decent direction is not necessarily the negative Riemannian gradient $-\grad f(\param_{n-1})$ and one needs to solve a quadratic minimization over the tangent space. The portion of tBMM in this case reads as 
		\begin{align}\label{eq:tBMM_rpl}
        \hspace{-0.5cm}
			\begin{cases}
				V_{n}\leftarrow   \argmin_{\eta\in T^*_{\param_{n-1}}} \left\lVert \eta - \left( -\frac{1}{\lambda} \grad f(\param_{n-1})\right)  \right\rVert^{2}\\ 
\hspace{1cm} = \textup{Proj}_{T^*_{\param_{n-1}}} \left( -\frac{1}{\lambda} \grad f(\param_{n-1})\right) , \\
				\param_{n}\leftarrow \rtr_{\param_{n-1}}\left( \alpha_n V_{n} \right). 
			\end{cases}
		\end{align}
where $\alpha_{n}$ is the step size that could be either a small enough constant or decided by Algorithm \ref{alg:R_line_search}. Namely, the constrained descent direction $V_{n}$ is the projection of the unconstrained descent direction $-\frac{1}{\lambda} \grad f(\param_{n-1})$ onto the lifted constraint set $T^*_{\param_{n-1}}$. The above is a Riemannian version of the projected gradient descent that our tBMM algorithm entails. As in the unconstrained case, our convergent result Theorem \ref{thm:RBMM_MtMt} holds for the above updates whenever $\lambda\ge L$. 
		
		
		We can apply a similar approach for the constrained block Riemannian optimization problem \eqref{eq:def_CROPT_block}. Namely, for each iteration $n$ and block $i$, use the following Riemannian prox-linear surrogate
		\begin{align}\label{eq:rpl_multi}
			\hat{g}_{n}^{(i)}(\eta):= f_{n}^{(i)}(\theta_{n-1}^{(i)} ) + \langle \grad f_{n}^{(i)}(\theta_{n-1}^{(i)}) ,\,  \eta  \rangle + \frac{\lambda}{2} \lVert \eta \rVert^{2}. 
		\end{align}
		Then the resulting portion of tBMM reads as the following block projected Riemannian gradient descent: For $i=1,\dots,m$,
		\begin{align}
			&\begin{cases}
				&V_{n}^{(i)}\leftarrow   \textup{Proj}_{T^{*}_{\theta_{n-1}^{(i)}}} \left( -\frac{1}{\lambda} \grad f(\theta_{n-1}^{(i)})\right), \\[5pt]
				&\,\,\theta_{n}^{(i)}\leftarrow \rtr_{\theta_{n-1}^{(i)}}( \alpha_{n}^{(i)} V_{n}^{(i)} ),
			\end{cases}
		\end{align}
		where $\alpha_{n}^{(i)}$ is the step size that could be either a small enough constant or decided by Algorithm \ref{alg:R_line_search}. 

  It is worth mentioning that utilizing the Riemannian gradient ``\textup{grad}'' instead of the full Euclidean gradient ``$\nabla$'' can provide significant computational savings.  One extensively investigated example is RPGD on fixed-rank manifolds (Example \ref{eg:fixed-rank}). There, computing $\textup{Proj}_{\mathcal{R}_r} \left( \theta^{(i)}_{n-1} - \frac{1}{\lambda} \nabla \right)$ involves computing the full SVD of a $n\times m$ matrix, while $\textup{Proj}_{\mathcal{R}_r} \left( \theta^{(i)}_{n-1} - \frac{1}{\lambda} \grad \right)$ only involves computing the SVD of a much smaller $2r \times 2r$ matrix (see e.g. \cite{wei2016guarantees}) when $r\ll \min\{m,n\}$.

  \subsection{Nonsmooth Proximal Gradient Method on the Stiefel Manifold \cite{chen2020proximal}}\label{sec:nonsmooth_st}
	In \cite{chen2020proximal}, the authors focus on the nonsmooth optimization problem on the Stiefel manifold as follows,
\begin{equation}\label{eq:nonsmooth_st}
    \min_{\param \in \mathcal{V}^{n\times p}} f(\param) := \phi(\param)+\psi(\param),
\end{equation}
where $\phi$ is Euclidean smooth with parameter $L_\phi$, $\psi$ is convex and Lipschitz continuous with parameter $L_\psi$ but is not necessarily Euclidean smooth. In the paper, the authors propose the following algorithm to solve \eqref{eq:nonsmooth_st},
\begin{align}
    \begin{cases}
    G(V):= \phi(\param_{n-1}) + \langle \grad \phi(\param_{n-1}) , V\rangle  \\
    + \lambda \|V\|^2_F + \psi(\param_{n-1} + V) \\
        V_n = \argmin_{V\in T_{\param_{n-1}}} G(V)\\
        \param_n = \rtr_{\param_{n-1}}(\alpha V_n), \label{eq:st_updates}
    \end{cases}
\end{align}
 where $\alpha>0$ is a step size. Recall the Stiefel manifold is compact, in order to compare and bound the difference of vectors before and after retraction, the authors in \cite{chen2020proximal} use the following proposition substantially to establish the complexity result.

\begin{prop}[Compact manifold embedded in Euclidean space \cite{boumal2019global,liu2019quadratic}]
    \label{prop:cpt_manifold_in_Euclidean}
    Let $\mathcal{M}$ be a compact Riemannian manifold embedded in an Euclidean space with norm $\lVert \cdot \rVert$. Then there exist constants $M_{1},M_{2}>0$ such that for all $\theta\in \mathcal{M}^{(i)}$ and $V\in T_{\theta}\mathcal{M}$,
     \begin{align}
        &\lVert \rtr_{\theta}(V) - \theta \rVert \le M_{1} \lVert V \rVert, \\
        & \lVert \rtr_{\theta}(V) - (\theta+V) \rVert \le M_{2} \lVert V \rVert^{2}.
    \end{align}
\end{prop}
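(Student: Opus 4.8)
The plan is to exploit the two defining properties of a retraction---the centering property $\rtr_\theta(\mathbf{0})=\theta$ and the local rigidity $D_V\rtr_\theta(\mathbf{0})=\mathrm{id}_{T_\theta\mathcal{M}}$---together with the compactness of $\mathcal{M}$. Since $\mathcal{M}$ is smooth and the retraction $(\theta,V)\mapsto \rtr_\theta(V)$ is a smooth map on the tangent bundle $T\mathcal{M}$, its fiber derivatives $D_V\rtr_\theta$ and $D_V^2\rtr_\theta$ depend continuously on $(\theta,V)$. The main obstacle is that a naive Taylor estimate only controls $\rtr$ on a compact region of $T\mathcal{M}$, whereas both inequalities must hold for \emph{all} $V\in T_\theta\mathcal{M}$, including arbitrarily large tangent vectors. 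I would resolve this by splitting into a small-$V$ regime (handled by Taylor expansion on a compact set) and a large-$V$ regime (handled by the boundedness of $\mathcal{M}$ in the ambient Euclidean space).

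Fix a threshold $\delta>0$ and consider the ball bundle $K_\delta:=\{(\theta,V):\theta\in\mathcal{M},\ \|V\|\le\delta\}$, which is compact because $\mathcal{M}$ is. For the first inequality, on $K_\delta$ I would write $\rtr_\theta(V)-\theta=\rtr_\theta(V)-\rtr_\theta(\mathbf{0})=\int_0^1 D_V\rtr_\theta(tV)[V]\,dt$, so that $\|\rtr_\theta(V)-\theta\|\le C_1\|V\|$ with $C_1:=\sup_{K_\delta}\|D_V\rtr_\theta\|<\infty$, the finiteness coming from joint continuity of $D_V\rtr_\theta$ on the compact $K_\delta$. For $\|V\|>\delta$, compactness of $\mathcal{M}$ gives a bound $\|\theta\|\le R$ for all $\theta\in\mathcal{M}$, and since $\rtr_\theta(V)\in\mathcal{M}$ we get $\|\rtr_\theta(V)-\theta\|\le 2R\le (2R/\delta)\|V\|$. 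Taking $M_1:=\max\{C_1,\,2R/\delta\}$ yields the first bound for all $V$.

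For the second inequality I would reuse the same case split. On $K_\delta$, the map $V\mapsto \rtr_\theta(V)-\theta-V$ vanishes to second order at $V=\mathbf{0}$: its value there is $\mathbf{0}$ by centering, and its differential there is $D_V\rtr_\theta(\mathbf{0})-\mathrm{id}=\mathbf{0}$ by local rigidity---this is precisely the retraction axiom that makes $\theta+V$ the correct first-order term. Hence a second-order Taylor estimate gives $\|\rtr_\theta(V)-(\theta+V)\|\le C_2\|V\|^2$ with $C_2:=\tfrac12\sup_{K_\delta}\|D_V^2\rtr_\theta\|<\infty$. For $\|V\|>\delta$, I would bound crudely using the first inequality: $\|\rtr_\theta(V)-(\theta+V)\|\le \|\rtr_\theta(V)-\theta\|+\|V\|\le (M_1+1)\|V\|\le ((M_1+1)/\delta)\|V\|^2$. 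Setting $M_2:=\max\{C_2,\,(M_1+1)/\delta\}$ then gives the claim for all $V$, completing the proof.
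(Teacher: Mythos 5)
Your argument is correct. The paper itself does not prove this proposition---it defers to the cited references \cite{boumal2019global,liu2019quadratic}---and your two-regime argument (Taylor expansion with the retraction axioms $\rtr_\theta(\mathbf{0})=\theta$ and $D\rtr_\theta(\mathbf{0})=\mathrm{id}$ on the compact ball bundle $K_\delta$, plus boundedness of $\mathcal{M}$ in the ambient space for $\|V\|>\delta$) is essentially the standard proof given there. The only caveat, shared by the statement itself, is the implicit assumption that $\rtr_\theta$ is defined on all of $T_\theta\mathcal{M}$ rather than only on a ball of positive retraction radius.
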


In fact, when replacing the compact manifold with a compact constrained set on an arbitrary embedded submanifold, the bounds in Proposition \ref{prop:cpt_manifold_in_Euclidean} still hold. This is one situation that our assumption in \ref{assumption:A0_optimal_gap}\textbf{(iii)} holds. When $\phi$ in \eqref{eq:nonsmooth_st} is $L_\phi$-smooth, the algorithm \eqref{eq:st_updates} with proper regularization parameter $\lambda$ falls into our tBMM framework, and therefore we can apply the convergence and complexity results in Theorem \ref{thm:RBMM_MtMt}. The complexity results are formally stated in the following corollary. The proof is in Appendix \ref{sec:proof_eg}.

\begin{corollary}[Complexity of nonsmooth proximal gradient method on Stiefel manifolds]\label{cor:nonsmooth_Stiefel}
Let $\phi$ in \eqref{eq:nonsmooth_st} be $L_\phi$-smooth. Then the complexity results in Theorem \ref{thm:RBMM_MtMt} hold for the algorithm \eqref{eq:st_updates} when $\lambda\ge L_\phi$. In particular, the algorithm \eqref{eq:st_updates} has iteration complexity of $O(\eps^{-2})$. 
\end{corollary}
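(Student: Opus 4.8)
The plan is to recognize the update \eqref{eq:st_updates} as the single-block ($m=1$) instance of tBMM (Algorithm \ref{algorithm:BMM}) in which the tangential surrogate is the Riemannian prox-linear surrogate of the form \eqref{eq:Riemannian_prox_linear_single}, namely $\hat g_n(\eta)=\phi(\param_{n-1})+\langle\grad\phi(\param_{n-1}),\eta\rangle+\lambda\|\eta\|_F^2$, with the convex part $\psi$ carried along unchanged as $\psi(\param_{n-1}+\eta)$. Under this dictionary ($\M=\Theta=\mathcal V^{n\times p}$, $\varphi=\phi$), the minimizer $V_n$ in \eqref{eq:st_updates} is exactly $V_n^{(1)}=\argmin_\eta G_n^{(1)}(\eta)$ with $G_n^{(1)}=\hat g_n+\psi(\param_{n-1}+\cdot)$ as in \eqref{eq:tMM_MtMt}. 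Consequently the corollary reduces to checking that Assumptions \ref{assumption:A0_optimal_gap} and \ref{assumption:Optn2} hold for this choice of surrogate; the three estimates of Theorem \ref{thm:RBMM_MtMt}, and in particular the $O(\eps^{-2})$ bound, then follow verbatim with $m=1$.

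I would first dispatch Assumption \ref{assumption:A0_optimal_gap}. Part \textbf{(i)} is immediate: $\phi$ is $L_\phi$-smooth, hence continuously differentiable; $\psi$ is convex and $L_\psi$-Lipschitz by hypothesis; and since $\mathcal V^{n\times p}$ is compact the continuous objective $f$ is bounded below with compact sublevel sets. Part \textbf{(ii)} holds trivially with $\Delta_n\equiv 0$, because the subproblem $V_n=\argmin_\eta G(\eta)$ is solved exactly, so $\sum_n\Delta_n=0<\infty$. Part \textbf{(iii)}, the second-order retraction bound $\|\rtr_\theta(V)-(\theta+V)\|\le M_2\|V\|^2$, is exactly the second conclusion of Proposition \ref{prop:cpt_manifold_in_Euclidean}, which applies because the Stiefel manifold is a compact embedded submanifold.

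For Assumption \ref{assumption:Optn2}, the smooth surrogate $\hat g_n$ is a quadratic with Hessian $2\lambda I$, so its strong convexity \textbf{(i)} and restricted smoothness \textbf{(ii)} hold with $\rho_n^{(1)}$ and $L_n^{(1)}$ proportional to $\lambda$; for \textbf{(iii)} the compactness of $\mathcal V^{n\times p}$ gives a uniform injectivity-radius lower bound $r_0>0$, and the effective (geodesically convex) constraint is the lifted ball $T^*_{\param_{n-1}}$, inside which the minimizer $V_n$ stays because the coercive penalty $\lambda\|V\|_F^2$ against the Lipschitz $\psi$ and bounded $\grad\phi$ keeps $\|V_n\|$ small for $\lambda$ large. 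The one genuinely nontrivial step, which I expect to be the main obstacle, is verifying that $\hat g_n$ truly tangentially majorizes $\phi$, i.e. that the pullback $\hat\phi=\phi\circ\rtr_{\param_{n-1}}$ obeys the restricted Lipschitz-gradient inequality \eqref{eq:f_Lipschitz_type_grad}. This requires converting the \emph{Euclidean} $L_\phi$-smoothness of $\phi$ into a bound along the retraction: writing $\rtr(\eta)-\param_{n-1}=\eta+\big(\rtr(\eta)-\param_{n-1}-\eta\big)$, applying Euclidean descent for $\phi$, using $\langle\nabla\phi,\eta\rangle=\langle\grad\phi,\eta\rangle$ for $\eta\in T_{\param_{n-1}}$, and absorbing the remainder through the two bounds $\|\rtr(\eta)-\param_{n-1}\|\le M_1\|\eta\|$ and $\|\rtr(\eta)-(\param_{n-1}+\eta)\|\le M_2\|\eta\|^2$ of Proposition \ref{prop:cpt_manifold_in_Euclidean} (with $\|\nabla\phi\|$ bounded on the compact manifold). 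Pinning the resulting effective smoothness constant so that it is dominated at exactly $\lambda\ge L_\phi$ — rather than some larger constant — is the delicate accounting, for which the factor-of-two in the quadratic coefficient $\lambda\|\eta\|_F^2$ versus $\tfrac\lambda2\|\eta\|^2$ is intended to supply the needed slack, making $\hat g_n$ a valid tangential majorizer. With all hypotheses of Theorem \ref{thm:RBMM_MtMt} verified, invoking that theorem at $m=1$ yields the stated convergence and the $O(\eps^{-2})$ iteration complexity.
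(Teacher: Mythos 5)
Your proposal is correct and follows essentially the same route as the paper: cast \eqref{eq:st_updates} as single-block tBMM with the prox-linear surrogate, verify \ref{assumption:A0_optimal_gap} via the hypotheses on $f$, the trivially summable optimality gaps, and Proposition \ref{prop:cpt_manifold_in_Euclidean}, verify \ref{assumption:Optn2} via quadraticity of the surrogate and compactness of the Stiefel manifold, and reduce the tangential-majorization check to converting Euclidean $L_\phi$-smoothness into restricted smoothness of the pullback (which the paper delegates to Prop.~\ref{lem:g_smooth_Stiefel}). The ``delicate accounting'' you flag for the threshold $\lambda\ge L_\phi$ is real --- the restricted-smoothness constant from Prop.~\ref{lem:g_smooth_Stiefel} is $M_0^2L_\phi+M_2L_N$, not $L_\phi$ --- but the paper's own proof glosses over this in exactly the same way, so your treatment matches theirs.
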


We remark that in \cite{chen2020proximal}, the authors established complexity results based on a different definition of $\eps$-stationary point, namely, $\param_n$ is $\eps$-stationary point if $\|V_n\|\le \eps/ L_\phi$. Though this definition brings practical benefits, it is not a generic definition. The definition of $\eps$-stationary point \eqref{eq:stationary_approximate} we used is more generic, and is a natural generalization of the corresponding concept in the Euclidean setting. Moreover, our definition allows further constraints on the manifold, i.e. the constraint set $\Param$ is not necessarily the entire manifold $\mathcal{M}$.



\section{Numerical Validation}	\label{sec:stylized_app}
In this section, we provide numerical validation of tBMM on various problems. Additional details of the experiments are provided in Appendix \ref{sec:appendix_numerics}.
\begin{figure}
  \includegraphics[width=.49\linewidth]{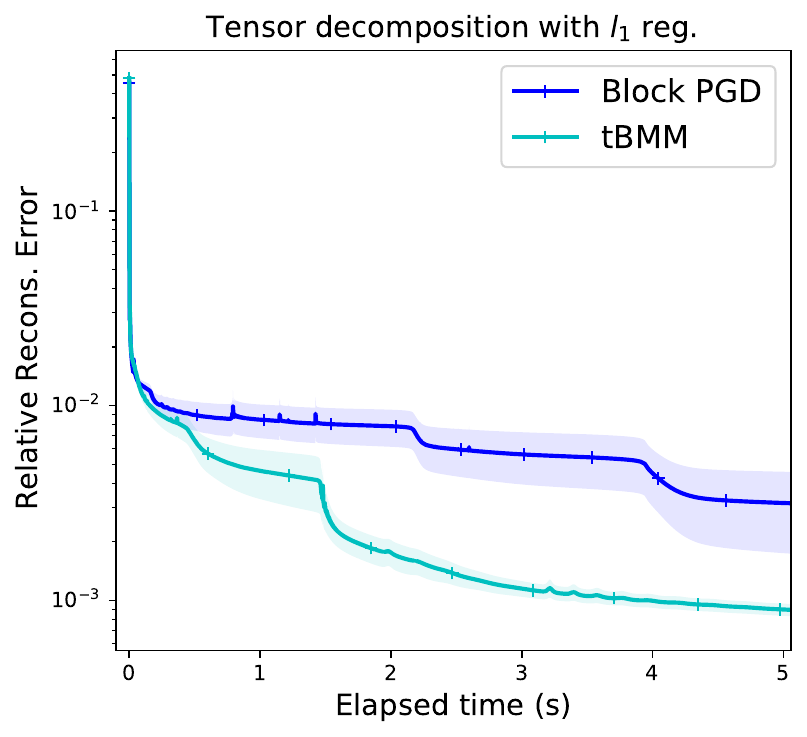}
  \includegraphics[width=.49\linewidth]{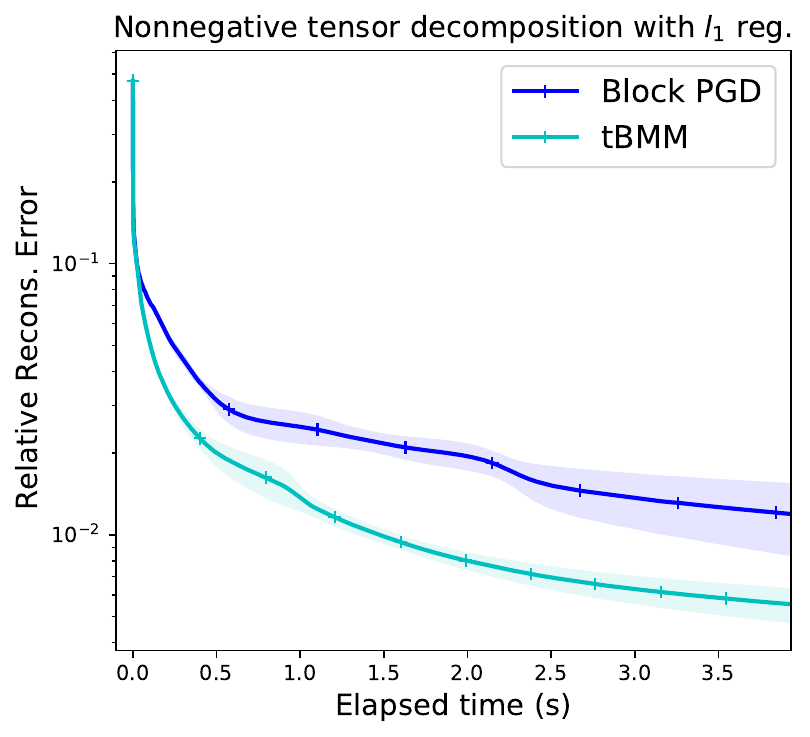}%
  \vspace{-0.1cm}
\caption{Comparison of tBMM and block projected gradient descent on (nonnegative) tensor decomposition problems with $l_1$ regularizations. Relative errors with standard deviation are shown by the solid lines and shaded regions of respective colors.}
\label{fig:NCPD}
\end{figure}

\textbf{Nonnegative Tensor Decomposition with Riemannian Constraints.} Given a tensor $\X \in  \R^{I_{1}\times \cdots \times I_{m}}$ and a fixed integer $R\ge 1$, the tensor decomposition problem aims to find the loading matrices $U^{(i)} \in \R^{I_{i}\times R}$ for $i=1,\dots, m$ such that the sum of the outer products of their respective columns approximate $\X$: $\X \approx  \sum_{k=1}^{R} \bigotimes_{i=1}^{m} U^{(i)}[:,k]$, where $U^{(i)}[:,k]$ denotes the $k^{\textup{th}}$ column of the $I_{i}\times R$ loading matrix $U^{(i)}$ and $\bigotimes$ denotes the outer product. One can also impose Riemannian constraints on the loading matrices, such that each $U^{(i)} \in \mathcal{M}^{(i)}$ resides on a Riemannian manifold. This Riemannian tensor decomposition problem can be formulated as the following optimization problem:
\begin{equation}  
\label{eq:NTF} 
\small
\argmin_{U^{(i)}\in \Theta^{(i)}} \left( \begin{matrix}  f(U^{(1)},\dots, U^{(m)}) := \\ \left\lVert \X - \sum_{k=1}^{R} \bigotimes_{i=1}^{m} U^{(i)}[:,k] \right\rVert_{F}^{2} \hspace{-0.7em}+ \sum_{i=1}^{m}\lambda_{i} \lVert U^{(i)} \rVert_{1} \end{matrix}\right)\hspace{-0.3em}, 
\end{equation}
where $\Theta^{(i)}\subseteq \mathcal{M}^{(i)}$ denotes a closed and geodesically convex constraint set and $\lambda_{i}\ge 0$ is the parameter of  $\ell_{1}$-regularizer. 

In our experiments as shown in Figure \ref{fig:NCPD}, we let the first block $\mathcal{M}^{(1)}$ be the fixed-rank manifold $\mathcal{R}_r$ and the other blocks be Euclidean spaces. We consider two different cases: with or without nonnegativity constraints for each loading matrix. In both cases, we use the prox-linear surrogates (see Sec. \ref{sec:nonsmooth_st}) for tBMM. Due to the $\ell_1$ regularizer in \eqref{eq:NTF}, the subproblem of minimizing the tangential surrogate $\hat{G}$ does not have a closed-form solution. Furthermore, when nonnegativity is imposed, another projection onto the positive orthant is required when solving the subproblems. Hence, one needs to implement an iterative algorithm for solving the subproblems. This brings inexactness to the solution to the subproblems. In Figure \ref{fig:NCPD}, the left figure is the reconstruction error plot of solving problem \eqref{eq:NTF} without nonnegativity constraints, and the right figure is that with nonnegativity constraints. In both cases, tBMM outperforms block projected gradient descent (block PGD).

\begin{figure}
  \includegraphics[width=.49\linewidth]{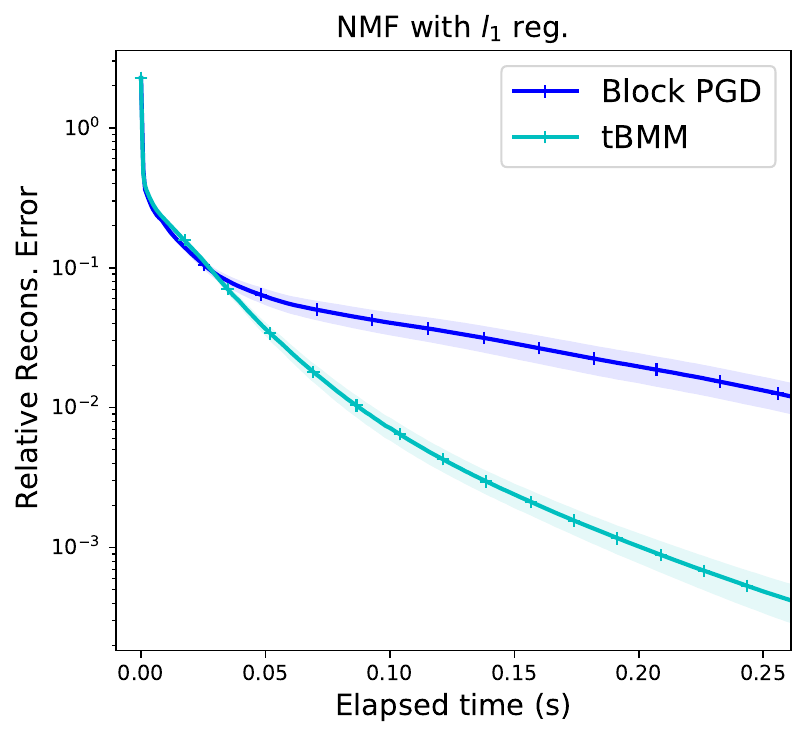}
  \includegraphics[width=.49\linewidth]{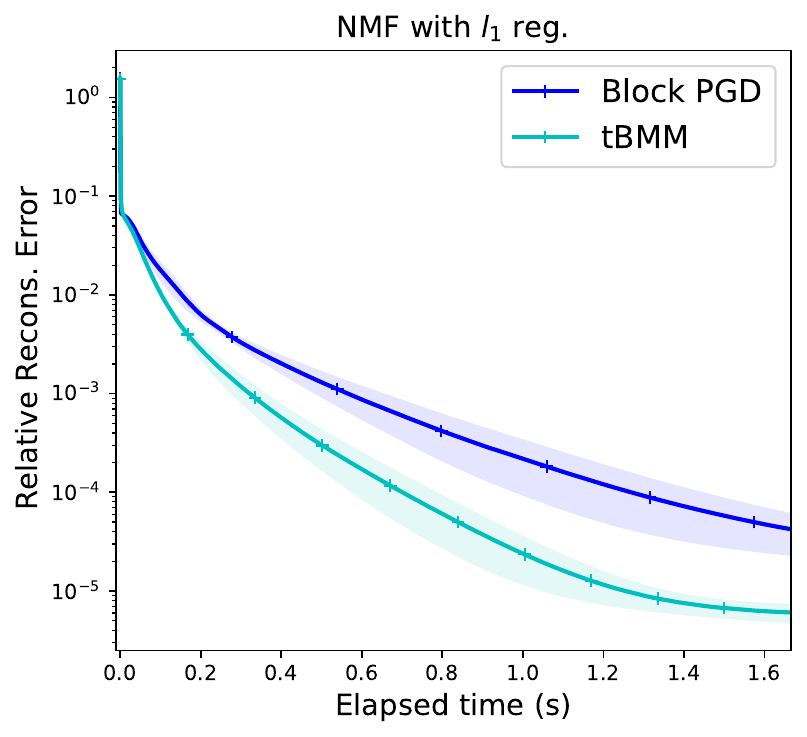}%
  \vspace{-0.1cm}
\caption{Comparison of tBMM and block projected gradient descent on nonnegative matrix factorization problems with $l_1$ regularizations. Relative errors with standard deviation are shown by the solid lines and shaded regions of respective colors.}
\label{fig:NMF}
\end{figure}

\textbf{Regularized Nonnegative Matrix Factorization with Riemannian Constraints.} Given a data matrix $X\in \R^{p\times N}$, the constrained matrix factorization problem is formulated as the following,
\begin{align*}
		\argmin_{W\in \mathcal{M}^{(1)}\subseteq \R^{p\times R}, \, H\in \mathcal{M}^{(2)}\subseteq \R^{R\times N}} \lVert X - WH \lVert_{F}^{2} + \lambda \lVert H \rVert_{1}. 
	\end{align*}
We consider a similar setting as the tensor decomposition problem in the last section. Namely, the first block $\mathcal{M}^{(1)}$ is a fixed-rank manifold $\mathcal{R}_r$ with nonnegativity constraint and the second block is Euclidean space. Similarly, the nonnegativity constraints and the $\ell_1$ regularizer make the subproblems have no closed-form solution. Therefore, the iterative algorithms for solving it bring inexactness to the solution to the subproblem. In Figure \ref{fig:NMF}, we showed the error plot with slightly different settings. In the left figure, there is no nonnegativity constraint to the second block $H$. In the right figure, we impose a nonnegativity constraint on both blocks. In both cases, tBMM using prox-linear surrogates outperforms block PGD.

\textbf{Low-rank Matrix Recovery.} Consider the \textit{low-rank matrix recovery} problem \cite{donoho2006compressed,candes2009accurate} formulated as the following in \cite{tanner2013normalized,wei2016guarantees},
\begin{align}\label{eq:compressed_sensing}
    \min_{X\in \mathcal{R}_r\subseteq \R^{m\times n}} \left( f(X):= \frac{1}{2}\|\mathcal{A}(X)-b\|^2\right)
\end{align}
where $\mathcal{R}_r$ is the manifold of fixed-rank matrices, see Example \ref{eg:fixed-rank}. The linear map $\mathcal{A}:\R^{m\times n}\to \R^p$ maps an $m\times n$ matrix to a $p$ dimensional vector, given by
\begin{align}
    \mathcal{A}(X)_i := \langle A_i, X\rangle, \;\; \textup{for}\;\; i=1,\cdots,p.
\end{align}
The $p$-dimensional vector $b$ represents the observations. Given the observations from the sensing operator $\mathcal{A}$, low-rank matrix recovery aims to find the fixed-rank matrix $X$. We define the umdersampling ratio as $\rho = p/mn$. 

We compare the performance of the proposed tBMM (Algorithm \ref{algorithm:BMM}) applied to the low-rank matrix recovery problem against \textit{normalized iterative hard thresholding} (NIHT) \cite{tanner2013normalized}, see details in Appendix \ref{sec:appendix_matrix_rec}. To solve \eqref{eq:compressed_sensing}, we use the prox-linear surrogates \eqref{eq:rpl_multi} for tBMM (Algorithm \ref{algorithm:BMM}) as discussed in Section \ref{sec:brpl}. The resulting updates for this single block problem are shown in \eqref{eq:tBMM_rpl}. 

\begin{figure}
  \includegraphics[width=.49\linewidth]{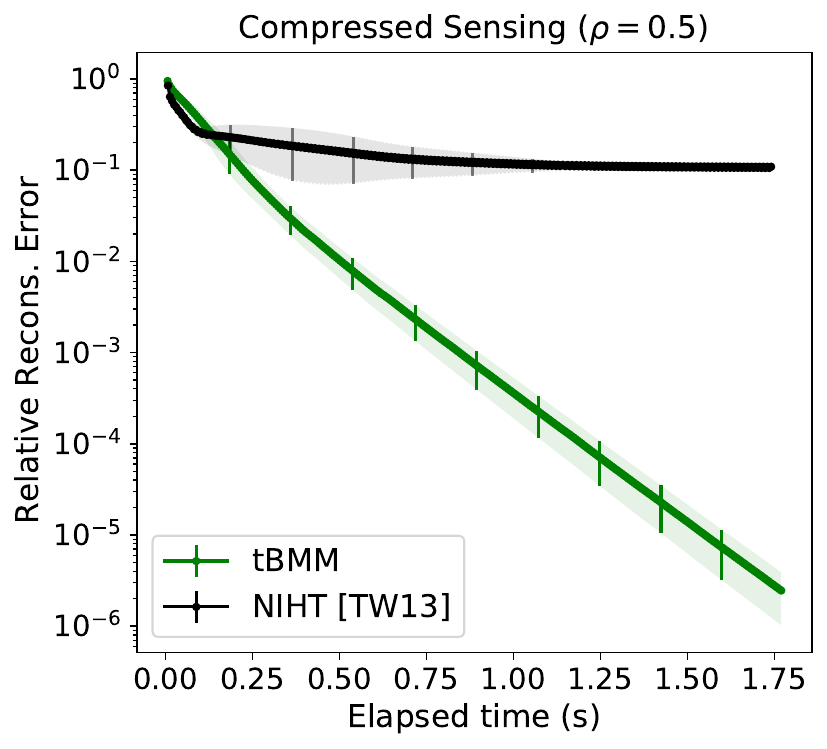}
  \includegraphics[width=.49\linewidth]{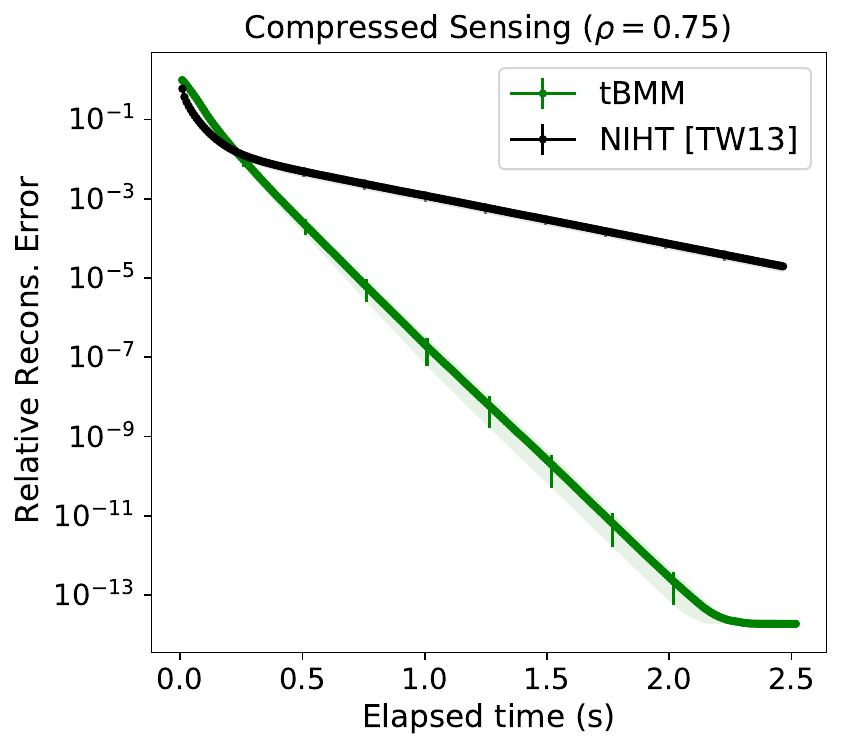}%
  \vspace{-0.4cm}
\caption{Comparison of tBMM and NIHT on the problem of low-rank matrix recovery. Relative errors with standard deviation are shown by the solid lines and shaded regions of respective colors.}
\label{fig:compressed_sensing}
\end{figure}

In Figure \ref{fig:compressed_sensing}, we compare the performance of tBMM and NIHT for $\rho=0.5$ and $0.75$ respectively. Under both settings, tBMM outperforms NIHT in terms of relative reconstruction error. Moreover, tBMM is observed to be more robust for different dimensions of the matrices. The better performance of tBMM over NIHT can be attributed to two main factors: Firstly, as discussed in Section \ref{sec:brpl}, NIHT needs to solve a full SVD of a matrix of dimension $m\times n$ in each iteration, while tBMM only needs to solve that of a $2r \times 2r$ matrix, which provides computational savings. Secondly, the performance of NIHT is affected by the \textit{oversampling ratio} defined as $\mu = r(m+n-r)/p$. When $\mu >1/2$, the recovery of all low-rank matrices by the algorithm is impossible \cite{tanner2013normalized,eldar2011unicity}, which leads to the relatively poor performance of NIHT in the left panel of Figure \ref{fig:compressed_sensing}.

\textbf{Inexact RGD.} In the introduction, we state that the convergence and complexity of inexact RGD can be deduced as a corollary of our Theorem 3.1. Here, we provide numerical validations based on the low-rank matrix recovery problem. In Figure \ref{fig:inexact_RGD}, we verify the performance of inexact tBMM and inexact RGD. We pose inexactness by adding a noise term $\Delta_n = c/(n+1)^2$ to the Riemannian gradient, where $n$ is the iteration number. The $\Delta_n$s are summable, which satisfies \ref{assumption:A0_optimal_gap}\textbf{(ii)}. The inexact tBMM and inexact RGD show compelling convergence speed.

\begin{figure}
  \includegraphics[width=.49\linewidth]{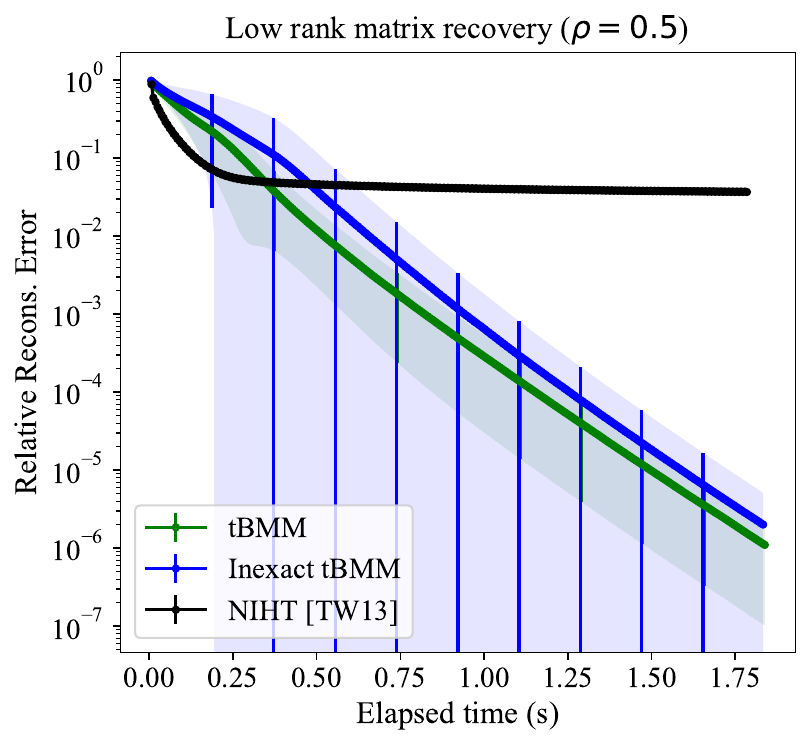}
  \includegraphics[width=.49\linewidth]{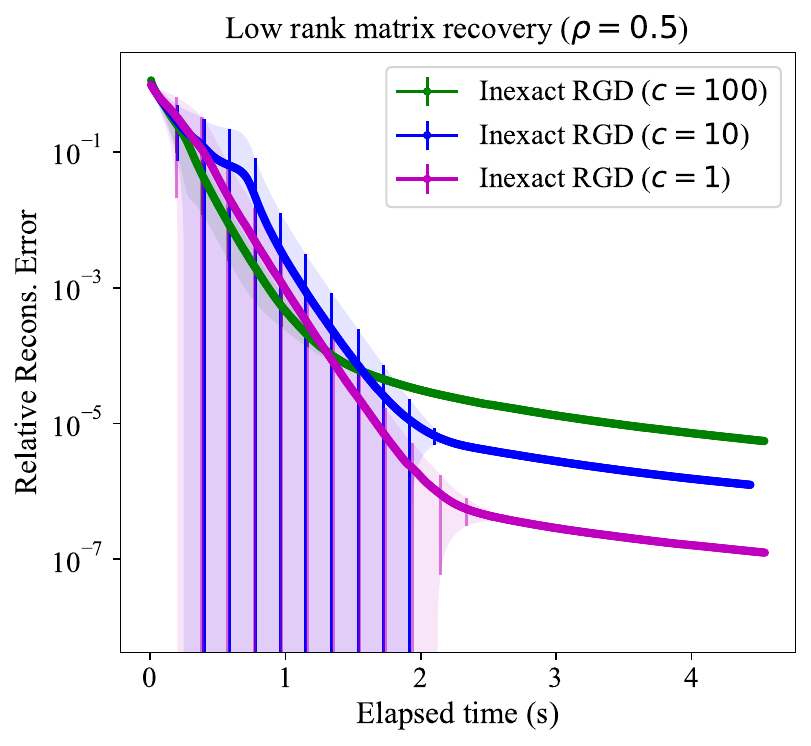}%
  \vspace{-0.4cm}
\caption{Illustration of the convergence of inexact RGD on the low-rank matrix recovery problem. Relative errors with standard deviation are shown by the solid lines and shaded regions. 
}
\label{fig:inexact_RGD}
\end{figure}

\section{Conclusion and Limitations}
In this paper, we proposed a general framework named tBMM 
which entails many classical first-order Riemannian optimization algorithms, including the inexact RGD and the proximal gradient method on Stiefel manifolds. tBMM is applicable to solving multi-block Riemannian optimization problems, and can handle additional (geodesically convex) constraints within each manifold, as well as nonsmooth nonconvex objectives. We established the convergence and complexity results of tBMM. Namely, tBMM converges to a $\eps$-stationary point within $O(\eps^{-2})$ iterations. The convergence and complexity results still hold when the subproblem in each iteration is computed inexactly, as long as the optimality gap is summable. We validated our theoretical results on tBMM through various numerical experiments. We also demonstrated that tBMM can show improved performance on various Riemannian optimization problems compared to existing methods.

\newpage

	\section*{Impact Statements}

  This paper presents work whose goal is to advance the field of Machine Learning. There are many potential societal consequences of our work, none of which we feel must be specifically highlighted here.

\section*{Acknowledgements}
We thank all the reviewers for the thoughtful comments and suggestions. YL is partially supported by the Institute for Foundations of Data Science RA fund through NSF Award DMS-2023239 and by the National Science Foundation through grants DMS-2206296. HL is partially supported by the National Science Foundation through grants DMS-2206296 and DMS-2010035. DN is partially supported by NSF DMS-2011140. LB is partially supported by NSF CAREER award CCF-1845076 and ARO YIP award W911NF1910027.

\bibliography{mybib}
\bibliographystyle{icml2024}

\newpage
\appendix
\onecolumn

\section{Preliminaries on Riemannian Optimization}
	\label{sec:preliminaries_Riemannian}

In this paper, we use the same notations as the common Riemannian optimization literature, see e.g. \cite{absil2008optimization} and \cite{boumal2023introduction}. We refer the readers to \cite{sakai1996riemannian}, \cite{lee2003introduction}, \cite{do1992riemannian}, and \cite{helgason1979differential} for the background knowledge on Riemannian geometry. Below we give some preliminaries on Riemannian optimization.
	
    We call a manifold to be a \textit{Riemannian manifold} if it is equipped with a Riemannian metric $\left(\xi, \eta\right) \mapsto\left\langle\xi, \eta\right\rangle_x \in \mathbb{R}$, where the tangent vectors $\xi$ and $\eta$ are on the tangent space $T_{x}\mathcal{M}$. We denote $T_{x}\mathcal{M}$ as $T_{x}$ when the corresponding manifold is clear from the context. Moreover, we drop the subscript $x$ of the inner product $\langle \cdot , \cdot \rangle_x$ when it is clear from the context. This inner product induces a norm on the tangent space, which is denoted by $\|\cdot\|_{x}$ or $\|\cdot\|$. The geodesic distance generalizes the concept of distance by measuring the shortest path between two points on a curved surface, accounting for the geometry of the manifold. We denote the geodesic distance between $x,y\in\M$ by $d_{\mathcal{M}}(x,y)$ or simply $d(x,y)$. An important concept in Riemannian optimization is the \textit{Riemannian gradient} of a smooth function $f: \mathcal{M} \rightarrow \R$ denoted as $\grad f(x)$ at $x\in \M$. It is defined as the tangent vector satisfying $\langle\operatorname{grad} f(x), \xi_{x}\rangle=\mathrm{D} f(x)\left[\xi_{x}\right], \forall \xi_{x} \in T_{x} \mathcal{M}$. Here $\mathrm{D} f(x)\left[\xi_{x}\right]$ is the directional derivative along the direction $\xi_{x}$. As aforementioned in the introduction, the widely used Riemannian gradient descent algorithms use retractions to map the Riemannian gradient to the manifold. A \textit{retraction} denoted as $\rtr$ is a smooth mapping from the tangent bundle $T\M$ to $\M$ that satisfying the following key properties.
    \begin{description}
		\item[(i)] For each $x\in \M$, define  $\rrtr(x)>0$ to be the `retraction radius' such that within the ball of radius $\rrtr(x)$ around the origin $\mathbf{0}=\mathbf{0}_{x}$, the restriction $\rtr_{x} : T_{x}\mathcal{M} \rightarrow \mathcal{M}$ of $\rtr$ to $T_{x}\mathcal{M}$ is well-defined. 
		
		\item[(ii)] $\rtr_{x}(\mathbf{0})=x$;  The differential of $\rtr_{x}$ at $\mathbf{0}$, $D \rtr_{x} (\mathbf{0})$, is the identity map on $T_{x}\mathcal{M}$. 
	\end{description}
For $x\in \M$ and $\eta\in T_x\M$, the retraction curve $t\mapsto \rtr_{x}(t\eta)$ agrees with the geodesic passing $x$ with initial velocity $\eta$ to the first order. Furthermore, if this retraction curve coincides with the geodesic for all $x\in \M$ and $\eta \in T\M$, then this specific retraction is called an \textit{exponential map}. In fact, the definition of exponential map involves solving a nonlinear ordinary differential equation, which brings computational burden. Hence, using computationally efficient retractions instead of exponential map is more desirable. The widely used retractions including $\rtr_{x}(\eta)=x+\eta$ in Euclidean spaces and $\rtr_{x}(\eta)=\frac{x+\eta}{\lVert x+\eta \rVert}$ on spheres. For more examples and details, we refer the readers to Sec. 4.1 in \cite{absil2008optimization}. A manifold $\M$ is called (geodesically) \textit{complete} if the domain of exponential map is the entire tangent bundle. The exponential map by its definition preserves distance, i.e. $\|\eta\| = d(x,y)$ for $\Exp_x (\eta) = y$. 

A function $g:\M\rightarrow \R$ can be lifted to the tangent spaces via retractions. Namely, consider the composition of $g$ and a retraction $\rtr$, define the \textit{pullback} as $\hat{g} :=g\circ \rtr_{x}:T_{x}\rightarrow \R$. In tBMM, we seek a majorizing surrogate of the lifted marginal loss function to minimize in each iteration. One important observation used in the analysis is the following,
    \begin{align}\label{eq:pullback_derivative}
		\langle \nabla \hat{g}(\mathbf{0}),\, \eta \rangle = D \hat{g}(\mathbf{0})[\eta] = D g(x) [ D \rtr_{x}(\mathbf{0}) [\eta] ] = D g(x) [\eta] = \langle \grad g(x),\, \eta \rangle. 
    \end{align}

At each point $x \in \M$, the Riemannian manifold $\M$ resembles an Euclidean space within a small metric ball with radius $r$ since it is locally diffeomorphic to the tangent space. The \textit{injectivity radius}, denoted as $\rinj(x)$, is defined as the supremum of values of $r$ such that this diffemorphism holds. Namely, $\Exp_x$ is a diffemorphism of $B(x,r)$ and its image on $\M$ when $r\le \rinj(x)$ where $B(x,r)=\{\eta\in T_x \mathcal{M}:\langle \eta,\eta\rangle<r\}\subseteq T_x \mathcal{M}$. Therefore, the inverse exponential map $\Exp^{-1}_x$ is well defined within the small ball of radius $\rinj(x)$. Many widely studied manifolds have uniformly positive injectivity radius, including compact manifolds (see Thm. III.2.3 in \cite{chavel2006riemannian}) and Hadamard manifolds (see Appendix \ref{sec:Had_eg}, \cite{afsari2011riemannian} and Theorem 4.1, p.221 in \cite{sakai1996riemannian}). We call a subset $C\subseteq M$ to be (geodesically) \textit{strongly convex} if there is a unique minimal geodesic $\gamma$ connecting any two points $x,y\in C$ and that $\gamma \subseteq C$. 

For a subset $\Param\subseteq \mathcal{M}$ and $x\in \Param$, define the \textit{tangent cone} $\mathcal{T}_{\Param}(x)$ and the \textit{normal cone} $\mathcal{N}_{\Param}(x)$ at $x$ as 
	\begin{align}\label{eq:def_tangent_normal_cone}
		&	\mathcal{T}_{\Param}\left(x \right):=\left\{u \in T_{x}\M \,\bigg|\,  \textup{$\Exp_{x}\left(t \frac{u}{\lVert u \rVert}\right)\in \Param$ for some $t\in (0, \rinj(x))$} \right\}\cup \{ \mathbf{0} \}, \\
		&	\mathcal{N}_{\Param}\left(x \right):=\left\{u \in T_{x}\M \,\bigg|\, \left\langle u, \eta\right\rangle \leq 0 \,\,  \textup{for all $\eta\in T_{x}\M$ s.t. $\Exp_{x}\left(t \frac{\eta}{\lVert \eta\rVert}\right)\in \Param$ for some $t\in (0, \rinj(x))$} \right\}.
	\end{align}
	Note that	$\mathcal{T}_{\Param}(x)=T_{x}\M$ and $\mathcal{N}_{\Param}(x)=\{ \mathbf{0} \}$ if $x$ is in the interior of $\Param$. When $\Param$ is strongly convex, then the tangent cone $\mathcal{T}_{\Param}(x)$ is a convex cone in the tangent space $T_{x}\M$ (see Prop.1.8 in \cite{cheeger1972structure} and \cite{afsari2011riemannian}).

 The \textit{lifted constraint set} $T_x^* \M$ is defined as 
\begin{align}\label{eq:def_lift_constraints1}
	T^{*}_{x} \mathcal{M} &:= \{ u\in T_{x} \mathcal{M} \,|\,  \textup{$\rtr_x(u)=x'$ for some $x'\in \Param$ with $d(x,x')\le r_0 /2$} \},
\end{align}
 where $r_0$ is the uniform lower bound of injectivity as in \ref{assumption:Optn2}\textbf{(iii)}. See an example of the lifted constraint set in Appendix \ref{sec:lifted_set} and Figure \ref{fig:lifted_set}. In \eqref{eq:def_lift_constraints1}, when exponential map is used as the retraction, the set $T^*_x \M$ can be viewed as the 'lift' of the restricted constraint set $\Param$ within a small ball to the tangent space $T_x\M$, which equals the image $\Exp^{-1}_x (\Param\cap B(x,r_0/2))$ by the inverse exponential map. For the special case when $\M$ is an Euclidean space, then the retraction becomes identity. Therefore if $\Param$ is a convex subset, we have $T^*_x \M = \Param$ is also convex. We remark that if $\Param$ is (geodesically) strongly convex, then the lifted constraint set $T^*_x \M$ is locally well defined, i.e. we can replace $r_0$ in \eqref{eq:def_lift_constraints1} by any value $r' \in (0, r_0)$.

 At different points on a Riemannian manifold, the corresponding tangent spaces are different. The \textit{parallel transport} allows one to transport a tangent vector to another tangent space. For a smooth curve $\gamma:[0,1] \to \M$, the parallel transport along $\gamma$ from the point $x=\gamma(0)$ to the point $y=\gamma(1)$ is denoted as  $\Gamma^{\gamma}_{x\to y}$. We drop the superscript $\gamma$ when it is clear from the context. Intuitively, for a tangent vector $\eta\in T_x\M$, the vector $\Gamma^{\gamma}_{x\to y} \eta$ is a tangent vector in $T_y \M$. The key property of parallel transport is that it is a linear isomorphism that preserves inner product. Namely, 
 \begin{align}
		\left\langle \Gamma^{\gamma}_{\gamma(0)\rightarrow \gamma(t)} \, (\eta) ,\,  \Gamma^{\gamma}_{\gamma(0)\rightarrow \gamma(t)} \, (\zeta) \right\rangle_{\gamma(t)} = \left\langle  \eta,\, \zeta\right\rangle_{\gamma(0)}  \quad \textup{for all $t\in [0,1]$, $\eta,\zeta\in T_{\gamma(0)}$}.
	\end{align}

	\subsection{Notations for Block Riemannian Optimization}
	\label{sec:notations}

In \eqref{eq:def_CROPT_block}, we aim to minimize an objective function $f$ within the product constraint set $\Param=\Theta^{(1)}\times \dots \times \Theta^{(m)}$, where each $\Theta^{(i)}$ is a subset on a Riemannian manifold $\mathcal{M}^{(i)}$. For convenience, we introduce the following notations: For $\param=[\theta^{(1)},\dots,\theta^{(m)}]$, let
\begin{align}
		\grad_{i} f(\param)&:=\textup{Riemmanian gradient of $\theta \mapsto f(\theta^{(1)},\dots,\theta^{(i-1)}, \theta, \theta^{(i+1)},\dots,\theta^{(m)})$},\\ 
		\grad f(\param)&:=[ \grad_{1} f(\param),\dots, \grad_{m} f(\param) ], \\
		 d(\mathbf{x}, \mathbf{y})  &:= \sqrt{\sum_{i=1}^{m} d(x^{(i)}, y^{(i)})^{2}} \quad \textup{for $\mathbf{x}=(x^{(1)},\dots,x^{(m)}), \,  \mathbf{y}=(y^{(1)},\dots,y^{(m)})\in \prod_{i=1}^{m}\M^{(i)}$}.  \label{eq:def_product_dist}
	\end{align}
We remark that one can endow the product manifold $\prod_{i=1}^{m}\M^{(i)}$ a joint Riemannian structure, and therefore the above $\grad f(\param)$ can be viewed as the full Riemannian gradient at $\param$ w.r.t the joint Riemannian structure. However, in the present paper we do not explicitly use the product manifold structure.

Throughout the paper, we denote $(\param_{n})_{n\ge 1}$ as an output of Algorithm \ref{algorithm:BMM} and write $\param_{n}=[\theta_{n}^{(1)},\dots,\theta_{n}^{(m)}]$ for $n\ge 1$. For each $n\ge 1$ and $i=1,\dots,m$, denote the marginal objective function as 
\begin{align}\label{eq:def_f_n_marginal1}
		f_{n}^{(i)}: \theta \mapsto f(\theta_{n}^{(1)},\dots,\theta_{n}^{(i-1)},\theta,\theta_{n-1}^{(i+1)},\dots,\theta_{n-1}^{(m)}),
	\end{align}
 which is at iteration $n$ and block $i$.

 Below we define the \textit{joint smoothness} used in Thm.\ref{thm:RBMM_MtMt}\textbf{(iii)}.
    \begin{definition}[Joint smoothness]\label{def:joint_smooth}
        Let $\M = \M^{(1)}\times \cdots\times \M^{(m)}$ be a product manifold where all the $\M^{(i)}$s are smooth Riemannian manifold. Let $\varphi:\M\to \R$ be a smooth function in each block. We say $\varphi$ is \textit{jointly smooth} if there exists a constant $L'>0$ such that for each distinct $i,j\in \{1,\dots,m\}$ and $(\theta^{(1)},\dots, \theta^{(m)})\in \Param$, whenever $V\in T_{\theta^{(j)}}$ such that $\rtr_{\theta^{(j)}}(V)\in \Theta^{(j)}$, 
				\begin{align}\label{eq:thm_MtMt_smoothness}
					&\left\lVert 
					\begin{matrix} 
						\hspace{-2cm} \grad_{i} \varphi(\theta^{(1)},\dots,   \theta^{(j)},  \dots, \theta^{(m)}) \\
						\hspace{1cm} - \grad_{i} \varphi(\theta^{(1)},\dots, \rtr_{\theta^{(j)}}(V),\dots, \theta^{(m)})  \end{matrix} \right\rVert  \hspace{0.3cm} \le L' \lVert V \rVert.
				\end{align}
    \end{definition}


\section{Relation Between Types of Majorizing Surrogates}
\label{sec:app_majorizer}

 In \cite{li2023convergence}, the authors studied the convergence of \textit{RBMM}, which is a Riemannian block majorization-minimization algorithm using the majorizing surrogates on the manifold. A function $g:\M\rightarrow \R$ is a \textit{majorizing surrogate} of $h$ at $\param$ if 
		\begin{align}
			g(x) \ge h(x) \quad \textup{for all $x\in \M$} \quad \textup{and} \quad g(\param)=h(\param). 
		\end{align}

 We remark that if a function $g:\M\rightarrow \R$ is a majorizing surrogate of $h:\M\rightarrow \R$ at some point $\param\in \M$, then the pullback surrogate $\hat{g}:=g\circ \rtr_{\theta}: T_{\param}\M\rightarrow \R$  is a tangential surrogate of $h$ at $\param$. Indeed, 
	\begin{align}
		&\hat{g}(\eta) = g(\rtr_{\param}(\eta)) \ge h(\rtr_{\param}(\eta))  = \hat{h}(\eta) \,\, ,\,  \forall \eta\in T_{\param}\M, \\
		&\hat{g}(\mathbf{0}) = g(\rtr_{\param}(\mathbf{0})) = g(\param) = h(\param) = h(\rtr_{\param}(\mathbf{0}))  = \hat{h}(\mathbf{0}). 
	\end{align}
Note that minimizing the majorizer $g$ directly on the manifold $\mathcal{M}$ and minimizing its pullback $\hat{g}$ on the tangent space are in general two different optimization problems. 
However, if the manifold $\mathcal{M}$ is Euclidean, then we can take the trivial retraction of translating the base point to the origin, in which case the two viewpoints coincide. Hence in the Euclidean case, tBMM agrees with RBMM in \cite{li2023convergence}. 
Also in the Euclidean setting, tBMM agrees with the standard Euclidean BMM \cite{hong2015unified}. Hence tBMM generalizes the Euclidean BMM. 

\section{Line Search Algorithm}
Below is the optional line search for choosing the step size $\alpha$ in Algorithm \ref{algorithm:BMM},
	\begin{algorithm}[H]
		\small
		\caption{Constrained Riemannian Line Search} 
		\label{alg:R_line_search}
		\begin{algorithmic}[1]
			\STATE \textbf{Input:} $\alpha\in[0,1]$ (initial step size); \, $\theta_{n}^{(i)}\in \mathcal{M}^{(i)}$ (previous block iterate);\, $V_{n}^{(i)}\in \mathcal{T}_{\Theta^{(i)}}(\theta_{n-1}^{(i)})$ (admissible search direction in the tangent cone);\,   $\gamma\in (0,1)$ (shrinkage parameter)
			
			\STATE 
			set\, $\alpha=1$
			\WHILE{  $f_{n}^{(i)}(\rtr_{\theta_{n-1}^{(i)}}(\alpha V_{n}^{(i)}))-f_{n}^{(i)}(\theta_{n-1}^{(i)})>\frac{\rho\alpha}{4}\lVert V_{n}^{(i)}\rVert^{2}$ or $\rtr_{\theta_{n-1}^{(i)}}(\alpha V_{n}^{(i)}) \notin \Theta^{(i)}$}
			\STATE \quad 
			$\alpha=\gamma\alpha$
			\ENDWHILE
			
			\STATE \textbf{output:}  $\alpha$ 
		\end{algorithmic}
	\end{algorithm}

\section{An Example of the Lifted Constraint Set}
\label{sec:lifted_set}

In this section, we provide a concrete example of a constrained problem on the manifold. An illustration of this example is shown in Figure \ref{fig:lifted_set}. Consider the 2-sphere $S^2 = \{x\in \mathbb{R}^{3} : \|x\|=1\}$, which can also be viewed as a simple Stiefel manifold. Consider three points $x,y,z \in S^2$ that $x$ is at the north pole; $y$ and $z$ are on the equator such that $d(x,y)=\frac{\pi}{2}$. Then the region on the sphere bounded by the geodesic triangle $\triangle xyz$ is a geodesic convex constraint set. Let us call this constraint set $\Theta$. Now for a point $\theta \in \Theta \subset S^2$, the corresponding lifted constraint set $T^*_{\theta}$ is one of the following three cases: 1) If $\theta$ is a vertex of the geodesic triangle (e.g. $x$ in Figure \ref{fig:lifted_set}), then the lifted constraint set is a circular sector with central angle $\frac{\pi}{2}$; 2) If $theta$ is at the boundary but not the vertex (e.g. $\theta_1$ in Figure \ref{fig:lifted_set}), then $T^*_{\theta}$ is a half-disk; 3) If $\theta$ is in the interior of the triangle (e.g. $\theta_2$ in Figure \ref{fig:lifted_set}), then $T^*_{\theta}$ is a disk. In all the three cases, the lifted constraint set is convex on the tangent space. In fact, one could generalize this example of the geodesic triangle to the geodesic polyhedron on a submanifold of $\mathbb{R}^n$. One can then show the corresponding lifted constraint sets satisfy our assumptions using the same argument.

\section{Examples of Hadamard Manifolds}
\label{sec:Had_eg}

The complete and simply connected Riemannian manifolds with nonpositive sectional curvature at every point is called \textit{Hadamard manifolds} (\cite{Burago2001ACI} and \cite{burago1992alexandrov}). The injectivity radius is infinity at each point on a Hadamard manifold. In fact, many widely studied spaces are Hadamard manifold and we give some examples below. We refer the readers to \cite{Bacak2014convex} for more details.
\begin{example}[Euclidean spaces]
		\normalfont
		The Euclidean space $\mathbb{R}^n$ with its usual metric is a Hadamard manifold. The sectional curvature at each point is constant and equal to 0.
		\hfill $\blacktriangle$
\end{example}

\begin{example}[Hyperbolic spaces]
		\normalfont
		Equip $\mathbb{R}^{n+1}$ with the $(-1, n)$-inner product given by
		$$
		\langle x, y\rangle_{(-1, n)}:=-x^0 y^0+\sum_{i=1}^n x^i y^i
		$$
		for $x:=\left(x^0, x^1, \ldots, x^n\right)$ and $y:=\left(y^0, y^1, \ldots, y^n\right)$. Denote
		$$
		\mathbb{H}^n:=\left\{x \in \mathbb{R}^{n+1}:\langle x, x\rangle_{(-1, n)}=-1, x_0>0\right\} .
		$$
		Then the inner product $\langle\cdot, \cdot\rangle$ induces a Riemannian metric $g$ on the tangent spaces $T_p \mathbb{H}^n \subset T_p \mathbb{R}^{n+1}$ for each $p \in \mathbb{H}^n$. The sectional curvature of $\left(\mathbb{H}^n, g\right)$ is constant and equal to $-1$ at every point.
		\hfill $\blacktriangle$
	\end{example}

\begin{example}[Manifolds of positive definite matrices]
		\normalfont
		\label{eg:PSD}
		The space $\mathbb{S}_{++}^n$ of all symmetric positive definite matrices of size $n\times n$ with real entries is a Hadamard manifold when equipped with the Riemannian metric given by
		\begin{equation}
			\left\langle\Omega_1, \Omega_2\right\rangle_{\Sigma} \triangleq \frac{1}{2} \operatorname{Tr}\left(\Omega_1 \Sigma^{-1} \Omega_2 \Sigma^{-1}\right) \quad \forall \Omega_1, \Omega_2 \in T_{\Sigma} \mathbb{S}_{++}^n.
		\end{equation}
		\hfill $\blacktriangle$
	\end{example}

\section{Preliminary Lemmas}
   \begin{lemma}\label{lem:bounded_subgradient}
       Let $\psi : \R^n \to \R$ be a convex and Lipschitz continuous function with respect to the norm $\|\cdot\|$ with parameter $L_\psi$. Then for any $x$ in the domain of $\psi$ and any subgradient $\partial \psi$, we have $\|\partial \psi (x)\| \le L_\psi$.
   \end{lemma}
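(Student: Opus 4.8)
The plan is to exploit the first-order subgradient inequality together with the Lipschitz bound, choosing a test point aligned with the subgradient so as to expose its full norm. Throughout I would use that the norm $\|\cdot\|$ is the one induced by the inner product $\langle \cdot, \cdot\rangle$ on $\R^n$, hence self-dual; this is precisely the feature that makes the subgradient bound come out in the same norm as the Lipschitz constant.

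First I would fix $x$ in the domain of $\psi$ and an arbitrary subgradient $g\in \partial\psi(x)$. If $g=\mathbf{0}$ the claim is immediate, so I may assume $g\neq \mathbf{0}$. The defining property of a subgradient gives, for every $y\in \R^n$,
\[
\psi(y) \ge \psi(x) + \langle g,\, y-x\rangle.
\]
Next I would specialize this inequality to the direction maximizing $\langle g,\cdot\rangle$ over the unit ball. For an inner-product norm this maximizer is $g/\|g\|$, so I set $y = x + t\,g/\|g\|$ with $t>0$. The subgradient inequality then yields
\[
\psi(y)-\psi(x) \ge \left\langle g,\, t\tfrac{g}{\|g\|}\right\rangle = t\|g\|,
\]
while $L_\psi$-Lipschitz continuity gives
\[
\psi(y)-\psi(x) \le \bigl|\psi(y)-\psi(x)\bigr| \le L_\psi\|y-x\| = L_\psi\, t .
\]
Combining the two displays and dividing by $t>0$ produces $\|g\|\le L_\psi$, and since $g$ was an arbitrary element of $\partial\psi(x)$, the conclusion follows.

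There is essentially no hard step here; the only point requiring care is the choice of the test direction, which is where self-duality of the inner-product norm enters. Had $\|\cdot\|$ not been induced by an inner product, the same argument would instead bound the \emph{dual} norm of the subgradient, and the statement would need to be rephrased accordingly. Since the paper works throughout in the Riemannian/Euclidean setting where each tangent space carries an inner product, this subtlety does not arise.
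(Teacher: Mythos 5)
Your proposal is correct and follows essentially the same route as the paper's proof: both test the subgradient inequality at a point displaced from $x$ in the direction $g/\|g\|$ (the paper takes step length $1$, you take a generic $t>0$) and then combine with the Lipschitz bound. Your explicit handling of the $g=\mathbf{0}$ case and the remark on self-duality of the inner-product norm are minor refinements, not a different argument.
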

\begin{proof}
    Fix any $x$ in the domain of $\psi$ and any $\partial \psi$. 
    Let 
    \begin{align}
        u^{*}\in \argmax_{u, \|u\|\le 1} \langle u , \partial \psi(x) \rangle. 
    \end{align}
    Let $y = x + u^{*}$. Then $u^{*}=\frac{\partial \psi(x)}{\lVert \partial \psi(x) \rVert}$ and $\langle u^{*},\, \partial \psi(x) \rangle= \lVert  \partial \psi(x) \rVert$, so 
    \begin{align}
        \|\partial \psi(x)\| = \langle u^{*} , \partial \psi (x) \rangle \le \psi(x+u^{*}) - \psi(x) \le L_{\psi} \|u^{*}\| = L_\psi.
    \end{align}
    where the first inequality is by convexity of $\psi$, the second inequality is by Lipschitz continuity of $\psi$ and the last equality is by definition of $y$.
\end{proof}		

\begin{prop}[Euclidean smoothness implies geodesic smoothness on the Stiefel manifold]\label{lem:g_smooth_Stiefel}
    If $f$ is $L$-smooth in Euclidean space $\R^{n\times p}$, then there exists a constant $\tilde{L}_g = M_0^2 L+ M_2 L_N$ such that $f$ satisfying restricted smoothness \eqref{eq:f_Lipschitz_type_grad} with parameter $\tilde{L}_g$ on the Stiefel manifold $\mathcal{V}^{n\times p}$, where $L_N = \max_{x\in \mathcal{V}^{n\times p}} \|\nabla f(x)\|$, $M_0$ is a constant related to the retraction, and $M_2$ is the same constant as in Prop. \ref{prop:cpt_manifold_in_Euclidean}.
\end{prop}
\begin{proof}
    See \cite{chen2021decentralized}, Lemma 2.4 and Appendix C.1.
\end{proof}

\section{Proof for Section \ref{sec:intro}}\label{sec:proof_intro}

    \begin{proof}[\textbf{Proof of Corollary \ref{cor:inexact_RGD}}]
 Under the assumptions of Cor. \ref{cor:inexact_RGD}, the inexact retraction step in \eqref{eq:RGD_inexact} can be viewed as an exact retraction on $V_n$. Therefore, the inexactness of the second step in \eqref{eq:RGD_inexact} is transferred to the first step. Namely, let 
\begin{align}
    \hat{g}_n(\eta)= \varphi(\param_{n-1})+\langle \grad \varphi(\param_{n-1}),\eta\rangle + \frac{L}{2}\|\eta\|^2.
\end{align}
Then inexact RGD can be viewed as tMM with tangential surrogate $\hat{g}_n(\eta)$. The exact solution of minimizing $\hat{g}_n$ is $-\frac{1}{2L}\grad \varphi(\param_{n-1})$ and the inexact solution used for inexact RGD is $V_n$. Recall $\Delta_n = \hat{g}_n(\alpha_n V_n) - \hat{g}_n (-\frac{1}{2L}\grad \varphi(\param_{n-1}))$ is summable. Hence \ref{assumption:A0_optimal_gap}\textbf{(ii)} is satisfied. Note \ref{assumption:A0_optimal_gap}\textbf{(i)} is satisfied by the assumptions in Cor. \ref{cor:inexact_RGD} and \ref{assumption:A0_optimal_gap}\textbf{(iii)} is not needed in the analysis when $\psi=0$ (see the proof of Theorem \ref{thm:RBMM_MtMt} in Section \ref{sec:pf_lemma}). By definition of $\hat{g}$, \ref{assumption:Optn2}\textbf{(i),(ii)} are satisfied. \ref{assumption:Optn2}\textbf{(iii)} again is satisfied by assumptions in Cor. \ref{cor:inexact_RGD}. Hence Thm. \ref{thm:RBMM_MtMt} holds. The convergence and complexity results follow.

    \end{proof}

\section{Proof for Section \ref{sec:examples}}\label{sec:proof_eg}

\begin{proof}[\textbf{Proof of Corollary \ref{cor:nonsmooth_Stiefel}}]
In order the complexity results in Thoerem \ref{thm:RBMM_MtMt} hold, we need to show assumptions \ref{assumption:A0_optimal_gap} and \ref{assumption:Optn2} hold. \ref{assumption:A0_optimal_gap}\textbf{(i)} holds by the assumptions on objective function of problem \eqref{eq:nonsmooth_st}. \ref{assumption:A0_optimal_gap}\textbf{(ii)} holds trivially. \ref{assumption:A0_optimal_gap}\textbf{(iii)} holds by Proposition \ref{prop:cpt_manifold_in_Euclidean}. To see \ref{assumption:Optn2} holds, let
\begin{align}
\hat{g}(V):= \phi(\param_{n-1}) + \langle \grad \phi(\param_{n-1}) , V\rangle + \lambda \|V\|^2_F .  
\end{align}
Then since $\hat{g}$ is a quadratic function, \ref{assumption:Optn2}\textbf{(i)}, \textbf{(ii)} hold. \ref{assumption:Optn2}\textbf{(iii)} holds since Stiefel manifold is complete and \eqref{eq:nonsmooth_st} is an unconstrained problem. Namely, the constrained set is the manifolds itself, i.e. $\Theta = \mathcal{V}^{n\times p}$. Lastly, $\hat{g}(V)$ is indeed a tangential majorizer by the same analysis in Section \ref{sec:brpl} and Prop. \ref{lem:g_smooth_Stiefel}. Therefore, all the assumptions of Theorem \ref{thm:RBMM_MtMt} are satisfied.
\end{proof}

\section{Convergence Analysis for tBMM}
\label{sec:pf_lemma}

In this section, we prove Theorem \ref{thm:RBMM_MtMt}. We will first prove the statement for the single-block case ($m=1$) for notational simplicity. The main ingredient in our analysis is the following descent lemma for constrained tBMM.

		\begin{lemma}[An inexact descent lemma for constrained tBMM]
			\label{lem:CRMM_descent}
			Suppose $\Param$ is a strongly convex subset of a Riemannian manifold $\mathcal{M}$. Fix a function $f:\mathcal{M}\rightarrow \R$ and $\param\in \Param$. Denote the pullback objective $\hat{f}:=f\circ \rtr_{\param}:T_{\param}\M\rightarrow \R$. Suppose we have a function $\hat{g}: T_{\param}\M \rightarrow \R$ such that the following hold:
			\begin{description}[itemsep=0.1cm]
				\item[(i)] (Tangential marjorization) $\hat{g}$ is a majorizing surrogate of the pullback smooth part of the objective $\hat{\varphi}$:
				\begin{align}
					\hat{g}(\eta) \ge \hat{\varphi}(\eta) \,\, \textup{for all $\eta\in T_{\param}\M$ and $\hat{g}(\mathbf{0})=\hat{\varphi}(\mathbf{0})$}. 
				\end{align}
				
				\item[(ii)] (Strong convexity of surrogate) The surrogate $\hat{g}:T_{\param} \rightarrow \R$ is $\rho$-strongly convex for some $\rho>0$: for all $\eta\in T_{\param}\M$,
				\begin{align}
					\hat{g}(\eta)  \ge \hat{g}(\mathbf{0}) + \langle \nabla \hat{g}(\mathbf{0}),\, \eta \rangle - \frac{\rho}{2}\lVert \eta \rVert^{2}. 
				\end{align}
			\end{description}
			Let $T_{\param}^*$ denote the lifted constrained set $\Param$ at $\param$ (see \eqref{eq:def_lift_constraints}). Define $V^{\star}\in \argmin_{\eta\in T_{\param}^*} \left(\hat{g}(\eta)+ \psi(\theta+\eta)\right)$.  Fix $V\in T_{\param}^*$ denote $\Delta:= \hat{g}(V) +\psi(\theta+V)- \hat{g}(V^{\star})-\psi(\theta+V^{\star})$. 
			Then for each $\alpha\in [0, 1]$, $\rtr_{\param}(\alpha V)\in \Param$ and 
			\begin{align}\label{eq:descent_lemma}
				&f(\rtr_{\param}(\alpha V)) - f(\param)\\ \le& -\frac{\rho \alpha}{2}\lVert V^{\star} \rVert^{2}-\frac{1}{2}\rho \alpha\left(1-\frac{2L_\psi M_2 +\rho}{\rho}\alpha\right)\lVert V \rVert^{2}+\alpha \Delta.
			\end{align}
		\end{lemma}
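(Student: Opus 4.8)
The plan is to push all of the Riemannian geometry into two controllable error terms and then run a standard strongly-convex majorization-minimization descent argument entirely on the tangent space. Write $G(\eta):=\hat{g}(\eta)+\psi(\theta+\eta)$, so that $V^{\star}=\argmin_{\eta\in T_{\param}^{*}}G(\eta)$ and $\Delta=G(V)-G(V^{\star})$. First I would settle feasibility: since $\mathbf{0}\in T_{\param}^{*}$ (because $\rtr_{\param}(\mathbf{0})=\param\in\Param$) and $T_{\param}^{*}$ is convex by \ref{assumption:Optn2}\textbf{(iii)}, for every $\alpha\in[0,1]$ the point $\alpha V=\alpha V+(1-\alpha)\mathbf{0}$ lies in $T_{\param}^{*}$, whence $\rtr_{\param}(\alpha V)\in\Param$ by the definition \eqref{eq:def_lift_constraints} of the lifted set. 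By tightness of the surrogate, $f(\param)=\hat{\varphi}(\mathbf{0})+\psi(\theta)=\hat{g}(\mathbf{0})+\psi(\theta)=G(\mathbf{0})$, so it suffices to bound $f(\rtr_{\param}(\alpha V))-G(\mathbf{0})$.

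Next I would split $f=\varphi+\psi$ at the retracted point and bound each piece back on the tangent space. The smooth part is immediate from tangential majorization (i): $\varphi(\rtr_{\param}(\alpha V))=\hat{\varphi}(\alpha V)\le\hat{g}(\alpha V)$. The nonsmooth part is where the geometry enters: combining $L_\psi$-Lipschitz continuity of $\psi$ with the retraction estimate \ref{assumption:A0_optimal_gap}\textbf{(iii)} gives $\psi(\rtr_{\param}(\alpha V))\le\psi(\theta+\alpha V)+L_\psi\lVert\rtr_{\param}(\alpha V)-(\theta+\alpha V)\rVert\le\psi(\theta+\alpha V)+L_\psi M_2\alpha^2\lVert V\rVert^2$. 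Adding the two bounds yields $f(\rtr_{\param}(\alpha V))\le G(\alpha V)+L_\psi M_2\alpha^2\lVert V\rVert^2$, isolating the single error term that is ultimately responsible for the step-size restriction.

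It then remains to prove the purely convex-analytic inequality $G(\alpha V)-G(\mathbf{0})\le-\tfrac{\rho\alpha}{2}\lVert V^{\star}\rVert^2-\tfrac{\rho\alpha}{2}(1-\alpha)\lVert V\rVert^2+\alpha\Delta$ on the convex set $T_{\param}^{*}$. I would use strong convexity of $\hat{g}$ from (ii) together with convexity of $\psi$ to obtain the convex-combination bound $G(\alpha V)\le\alpha G(V)+(1-\alpha)G(\mathbf{0})-\tfrac{\rho}{2}\alpha(1-\alpha)\lVert V\rVert^2$, so that $G(\alpha V)-G(\mathbf{0})\le\alpha\bigl(G(V)-G(\mathbf{0})\bigr)-\tfrac{\rho}{2}\alpha(1-\alpha)\lVert V\rVert^2$. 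Applying the same convex-combination inequality around the constrained minimizer $V^{\star}$, using $G(V^{\star})\le G(\eta)$ for all $\eta\in T_{\param}^{*}$ and letting the combination parameter tend to $0$, gives the quadratic-growth estimate $G(\mathbf{0})\ge G(V^{\star})+\tfrac{\rho}{2}\lVert V^{\star}\rVert^2$; with $G(V)=G(V^{\star})+\Delta$ this yields $G(V)-G(\mathbf{0})\le-\tfrac{\rho}{2}\lVert V^{\star}\rVert^2+\Delta$. Substituting back, adding the error term $L_\psi M_2\alpha^2\lVert V\rVert^2$, and collecting it into the coefficient of $\lVert V\rVert^2$ reproduces exactly \eqref{eq:descent_lemma}.

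The main obstacle is the nonsmooth term under retraction: unlike the smooth block-Riemannian analyses this lemma generalizes, the surrogate measures $\psi$ at the tangent-space point $\theta+\eta$ while $f$ measures it at the manifold point $\rtr_{\param}(\eta)$, and reconciling the two is exactly what forces the quadratic retraction estimate \ref{assumption:A0_optimal_gap}\textbf{(iii)} and the Lipschitz control on $\psi$, and what ultimately caps the admissible step size at $\alpha\le\rho/(\rho+2L_\psi M_2)$. A secondary technical point is that every convexity manipulation must be performed over $T_{\param}^{*}$ rather than the whole tangent space, so the convexity of the lifted constraint set and the feasibility of $\alpha V$ are used throughout; I would also note that property (ii) is invoked in its genuine $\rho$-strongly-convex form (satisfied, e.g., by the prox-linear surrogates of Section \ref{sec:brpl}), as both the convex-combination bound and the quadratic-growth bound require it.
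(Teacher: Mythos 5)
Your proposal is correct and follows essentially the same route as the paper's proof: the same decomposition $f(\rtr_{\param}(\alpha V)) \le G(\alpha V) + L_\psi M_2 \alpha^2\lVert V\rVert^2$ via majorization plus the Lipschitz-and-retraction estimate, the same strong-convexity combination bound on $G(\alpha V)-G(\mathbf{0})$, and the same quadratic-growth inequality $G(\mathbf{0})-G(V^{\star})\ge \tfrac{\rho}{2}\lVert V^{\star}\rVert^2$. The only (immaterial) difference is that you obtain that last inequality by a limiting convex-combination argument around $V^{\star}$, whereas the paper invokes the first-order optimality condition for $V^{\star}$ together with strong convexity of $G$.
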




		\begin{proof}[\textbf{Proof of Lemma \ref{lem:CRMM_descent}}]
			That $\rtr_{\param}(\alpha V)$ is well-defined and belongs to $\Param$ is due to the definitions of the retraction and the lifted constrained set. Hence we only need to show \eqref{eq:descent_lemma}. 
			
			Let $G:= \hat{g}+\psi :T_{\theta} \rightarrow \R$. Then since $\hat{g}$ is strongly convex and $\psi$ is convex, we have $G$ is strongly convex. Recall $T^*_{\param}$ is a convex subset of tangent space $T_{\param}$ by \ref{assumption:Optn2}\textbf{(iii)}. The optimization problem that defines $V^{\star}$ is to minimize a strongly convex function in a convex subset of Euclidean space, so it admits a unique solution $V^{\star}$. Then by the first order optimality condition and since  $\mathbf{0}\in T_{\param}$, 
			\begin{align}
				\langle \nabla  \hat{g}(V^{\star} ) + \partial \psi(\param + V^{\star}),\, \mathbf{0} -V^{\star} \rangle \ge 0. 
			\end{align}
			Since $G: T_{\theta} \rightarrow \R$  is  $\rho$-strongly convex, 
			\begin{align}
				& G(\mathbf{0})-G(V^{\star})\\
 \geq &\langle \nabla  \hat{g}(V^{\star} ) + \partial \psi(\param + V^{\star}), \mathbf{0} -V^{\star} \rangle + \frac{\rho}{2}\|V^{\star}\|^{2} \\
 \ge  &\frac{\rho}{2}\|V^{\star}\|^{2}.
			\end{align}
			Hence 
			\begin{align}
				&G(\mathbf{0}) - G(V) \\= &\left( G(\mathbf{0}) - G(V^{\star})  \right) + \left( G(V^{\star}) - G(V)  \right)  \ge  \frac{\rho}{2}\|V^{\star}\|^{2}-\Delta.
			\end{align}
			Again using the $\rho$-strong convexity of $G: T_{\param} \rightarrow \R$ and the above inequality, for $\alpha\in[0,1]$,
			\begin{align}
				G(\alpha V)  - G(\mathbf{0}) 
				& = G(\alpha V + (1-\alpha) \mathbf{0} )  - G(  \mathbf{0}) \\
				& \leq \alpha G(V)+ (1-\alpha)G(\mathbf{0}) + \frac{\rho \alpha(1-\alpha)}{2}\|V\|^2 - G(\mathbf{0}) \\
				&=\alpha(G(V)-G(\mathbf{0}))+\frac{\rho \alpha(\alpha-1)}{2}\|V\|^2 \\
				&\leq -\frac{\rho \alpha}{2}\lVert V^{\star} \rVert^{2}-\frac{\rho \alpha(1-\alpha)}{2}\lVert V \rVert^{2}+\alpha \Delta.
				\label{eq:g_descent_before_retraction} 
			\end{align}
			In order to conclude,  note the following: 
			\begin{align}
				f(\rtr_{\param}(\alpha V)) - f(\param)  & = \hat{\varphi}(\alpha V) + \psi(\rtr_{\param}(\alpha V)) - \hat{\varphi}(\mathbf{0}) - \psi(\param)\\
                    &\overset{(a)}{\le} \hat{\varphi}(\alpha V) + \psi(\alpha V) + L_\psi \|\rtr_{\param}(\alpha V)- \alpha V\| - \hat{\varphi}(\mathbf{0}) - \psi(\param)\\
				&  \overset{(b)}{\le} G(\alpha V) - G(\mathbf{0}) + L_\psi M_2 \|\alpha V\|^2 \\
				&  \overset{(c)}{\le} -\frac{\rho \alpha}{2}\lVert V^{\star} \rVert^{2}-\frac{1}{2}\rho \alpha\left(1-\frac{2L_\psi M_2 +\rho}{\rho}\alpha\right)\lVert V \rVert^{2}+\alpha \Delta.
				\label{eq:g_descent_before_retraction2} 
			\end{align}
			Namely, (a) follows from the Lipschitz continuity of $\psi$, (b) uses Proposition \ref{prop:cpt_manifold_in_Euclidean}, (c) uses \eqref{eq:g_descent_before_retraction}. This shows \eqref{eq:descent_lemma}, as desired. 
		\end{proof}

  A direct consequence of Lemma \ref{lem:CRMM_descent} is the boundedness of iterates, which is stated in the following proposition.
\begin{prop}[Boundedness of iterates]
		\label{prop:boundedness_iterates_opt2}
		Under \ref{assumption:A0_optimal_gap} and  \ref{assumption:Optn2}, the set $\{\param_n : n\geq1\}$ is bounded.
	\end{prop}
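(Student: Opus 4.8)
The plan is to use the inexact descent Lemma~\ref{lem:CRMM_descent} (specialized to the block setting) to show that the objective value decreases up to a summable error, and then combine this with the lower bound and compactness of sublevel sets from \ref{assumption:A0_optimal_gap}\textbf{(i)} to trap all iterates in a fixed compact set.

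First, I would apply Lemma~\ref{lem:CRMM_descent} to each block update in Algorithm~\ref{algorithm:BMM}. At iteration $n$ and block $i$, the lemma gives
\begin{align}
f_n^{(i)}(\theta_n^{(i)}) - f_n^{(i)}(\theta_{n-1}^{(i)}) \le \alpha_n^{(i)}\Delta_n^{(i)} - \frac{\rho\alpha}{2}\lVert V_n^{(i),\star}\rVert^2,
\end{align}
where the second term is nonpositive provided the step size satisfies $\alpha \le \rho/(\rho+2L_\psi M_2)$ (so that the coefficient of $\lVert V_n^{(i)}\rVert^2$ is nonpositive), exactly the range assumed in Theorem~\ref{thm:RBMM_MtMt}\textbf{(i)}. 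Summing the block updates telescopes across $i=1,\dots,m$ within a single outer iteration, using that the output of block $i$ is the input to block $i+1$, and yields
\begin{align}
f(\param_n) - f(\param_{n-1}) \le \sum_{i=1}^m \alpha_n^{(i)}\Delta_n^{(i)} \le \alpha\, m\,\Delta_n,
\end{align}
where I bound each per-block optimality gap by $\Delta_n$ as defined in \eqref{eq:def_optimality_gap} and drop the nonpositive descent terms.

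Next, I would sum this inequality over $n=1,\dots,N$ to telescope in $n$, obtaining
\begin{align}
f(\param_N) \le f(\param_0) + \alpha\, m \sum_{n=1}^{N}\Delta_n \le f(\param_0) + \alpha\, m \sum_{n=1}^{\infty}\Delta_n.
\end{align}
By \ref{assumption:A0_optimal_gap}\textbf{(ii)} the tail sum $\sum_{n=1}^\infty \Delta_n$ is finite, so the right-hand side is a finite constant $a$ independent of $N$. Hence every iterate $\param_N$ lies in the sublevel set $f^{-1}((-\infty,a])$, which is compact by \ref{assumption:A0_optimal_gap}\textbf{(i)}. A compact set is bounded, so $\{\param_n : n\ge 1\}$ is bounded, as claimed.

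The main obstacle I anticipate is the bookkeeping in the intra-iteration telescoping: one must check that after updating block $i$ the marginal objective $f_n^{(i+1)}$ for the next block is evaluated at the just-updated coordinates, so that the successive differences chain correctly into $f(\param_n)-f(\param_{n-1})$. One also needs the lower boundedness $f\ge f^*$ from \ref{assumption:A0_optimal_gap}\textbf{(i)} to ensure the running objective values cannot drift to $-\infty$, although for boundedness of iterates it is really the upper bound together with compact sublevel sets that does the work. Everything else is a routine application of the descent lemma and the summability assumption.
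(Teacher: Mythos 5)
Your proposal is correct and takes essentially the same route as the paper: apply the inexact descent lemma (Lemma~\ref{lem:CRMM_descent}) block-by-block under the step-size condition $\alpha\le \rho/(\rho+2L_\psi M_2)$, telescope over blocks and then over iterations to get $f(\param_n)\le f(\param_0)+m\sum_{k}\Delta_k<\infty$, and conclude via the compactness of sublevel sets in \ref{assumption:A0_optimal_gap}\textbf{(i)}. The paper's version is just terser (it drops the harmless factor $\alpha\le 1$ that you retain and leaves the intra-iteration telescoping implicit), so there is nothing to flag.
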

\begin{proof}[\textbf{Proof of Proposition \ref{prop:boundedness_iterates_opt2}}]
		Let $m\sum_{n=1}^{\infty}\Delta_n =T<\infty$. By Lemma \ref{lem:CRMM_descent} we immediately have, for all $n\ge 1$, $f(\param_n)\le f(\param_{n-1})+m\Delta_{n-1}$ and therefore $f(\param_n)\le f(\param_{0})+m\sum_{i=1}^{n-1}\Delta_{i}\le f(\param_{0})+T$ . Consider $K=\{\param \in \Param: f(\param)\leq f(\param_0)+T\}$, by \ref{assumption:A0_optimal_gap}\textbf{(i)}, $K$ is compact. Hence the set $\{\param_n : n\geq1\}$ is bounded.
	\end{proof}

	The following lemma states the local property of retractions, which is essential in the asymptotic convergence analysis.
	
	\begin{lemma}\label{lem:retaction_d_upper_bd}
		For $\param \in \mathcal{M}$, $V\in T_{\param}$ and $\alpha>0$, when $\alpha \|V\|$ is small, we have
		\begin{align}
			d\left(\rtr_{\param}(\alpha V),\param\right) = O(\alpha \lVert V \rVert).
		\end{align}
	\end{lemma}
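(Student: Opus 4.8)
The plan is to compare the retraction $\rtr_{\param}$ with the exponential map $\Exp_{\param}$ on the tangent space $T_{\param}\M$, exploiting that both share the same first-order behavior at the origin. First I would introduce the composite map $\phi := \Exp_{\param}^{-1}\circ \rtr_{\param}$, which is smooth on a neighborhood of $\mathbf{0}\in T_{\param}\M$ provided its image lies within the injectivity radius of $\param$. Since $\rtr_{\param}(\mathbf{0})=\param$ and $\rtr_{\param}$ is continuous, we have $\rtr_{\param}(\alpha V)\to \param$ as $\alpha\|V\|\to 0$; hence for $\alpha\|V\|$ small enough the point $\rtr_{\param}(\alpha V)$ lies inside the geodesic ball of radius $\rinj(\param)$, where $\Exp_{\param}^{-1}$ is well-defined and $d(\param,\Exp_{\param}(\eta))=\|\eta\|$.

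Next I would compute the differential of $\phi$ at the origin. Both $\rtr_{\param}$ and $\Exp_{\param}$ are retractions, so by property (ii) of a retraction $D\rtr_{\param}(\mathbf{0})=\mathrm{id}$ and $D\Exp_{\param}(\mathbf{0})=\mathrm{id}$, whence $D\Exp_{\param}^{-1}(\param)=\mathrm{id}$. By the chain rule, $D\phi(\mathbf{0})=D\Exp_{\param}^{-1}(\param)\circ D\rtr_{\param}(\mathbf{0})=\mathrm{id}$. Because $\phi$ is $C^{1}$ with $\phi(\mathbf{0})=\mathbf{0}$ and $D\phi(\mathbf{0})=\mathrm{id}$, continuity of $D\phi$ gives, for any $\epsilon>0$, a radius $\delta>0$ such that $\|D\phi(W)\|\le 1+\epsilon$ for $\|W\|\le\delta$; the mean value inequality then yields $\|\phi(W)\|=\|\phi(W)-\phi(\mathbf{0})\|\le (1+\epsilon)\|W\|$ on that ball.

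Finally, I would apply this with $W=\alpha V$. Since the exponential map preserves geodesic distance within the injectivity radius, $d(\rtr_{\param}(\alpha V),\param)=\|\Exp_{\param}^{-1}(\rtr_{\param}(\alpha V))\|=\|\phi(\alpha V)\|\le (1+\epsilon)\,\alpha\|V\|$ whenever $\alpha\|V\|\le\delta$, which is exactly the claimed bound $d(\rtr_{\param}(\alpha V),\param)=O(\alpha\|V\|)$.

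The main obstacle I anticipate is purely bookkeeping of the domain of validity: one must verify that $\rtr_{\param}(\alpha V)$ stays within the injectivity radius (so that $\Exp_{\param}^{-1}$, and hence $\phi$, is defined) and simultaneously within the neighborhood where the Lipschitz estimate on $\phi$ holds. Both are guaranteed by taking $\alpha\|V\|$ small, consistent with the asymptotic ($\alpha\|V\|$ small) phrasing of the statement. An alternative, more computational route would instead bound the ambient Euclidean distance via Assumption \ref{assumption:A0_optimal_gap}\textbf{(iii)}, giving $\|\rtr_{\param}(\alpha V)-\param\|\le \alpha\|V\|+M_2\alpha^{2}\|V\|^{2}$, and then invoke the local equivalence of chordal and geodesic distance on an embedded submanifold; but that equivalence itself rests on the same exponential-map comparison, so the $\phi$-based argument above is the cleaner and more self-contained option.
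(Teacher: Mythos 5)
Your proof is correct, and it rests on the same core idea as the paper's — comparing the retraction to the exponential map at $\param$ — but the execution is genuinely different. The paper's proof is a two-line triangle inequality, $d(\rtr_{\param}(\alpha V),\param)\le d(\rtr_{\param}(\alpha V),\Exp_{\param}(\alpha V))+d(\Exp_{\param}(\alpha V),\param)=o(\alpha\|V\|)+\alpha\|V\|$, where the $o(\alpha\|V\|)$ estimate for the first term (the first-order agreement of the retraction curve with the geodesic) is outsourced to a citation. You instead work with the composite $\phi=\Exp_{\param}^{-1}\circ\rtr_{\param}$, compute $D\phi(\mathbf{0})=\mathrm{id}$ from the retraction axioms and the inverse function theorem, and conclude via continuity of $D\phi$ and the mean value inequality. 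What your route buys is self-containedness and an explicit constant $(1+\epsilon)$ with an explicit radius of validity $\delta$, together with the careful bookkeeping (staying inside the injectivity radius so that $\Exp_{\param}^{-1}$ is defined and $d(\param,\Exp_{\param}(\eta))=\|\eta\|$) that the paper's one-liner glosses over; what the paper's route buys is brevity, at the cost of leaning on an external reference for the key estimate. Both are valid, and for the purpose the lemma serves in the convergence analysis (only the $O(\alpha\|V\|)$ order matters), they are interchangeable.
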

\begin{proof}[\textbf{Proof of Lemma \ref{lem:retaction_d_upper_bd}}]
		By triangle inequality we have
		\begin{align}
			d\left(\rtr_{\param}(\alpha V),\param\right) &\le d\left(\rtr_{\param}(\alpha V),\Exp_{\param}(\alpha V)\right) + d\left(\Exp_{\param}(\alpha V),\param\right) \\ 
			&= o(\alpha \lVert V \rVert ) + \|\alpha V\| \\
			&=O(\alpha \lVert V \rVert),
		\end{align}
		where the first equality is by definition of the exponential map and properties of retraction, see \cite{absil2012projection}.
	\end{proof}
	

Now we are ready to prove Theorem \ref{thm:RBMM_MtMt} and we first prove it for $m=1$. Then in the following proof, we will omit all superscripts $(i)$ in Algorithm \ref{algorithm:BMM} for indicating the blocks since we are in the single-block case. 
	\begin{proof}[\textbf{Proof of Theorem \ref{thm:RBMM_MtMt} for $m=1$}]
		
			By the hypothesis, the step-size $\alpha_{n}$ for $n\ge 1$ is set as $\alpha_n = \alpha \le 1$.
			Therefore, for each $n\ge 1$, $\rtr_{\param_{n-1}}(\alpha V_{n})$ is well-defined and belongs to $\Param$.

			Let $V_{n}^{\star}=\argmin_{\eta\in T^*_{\param_{n-1}}} \left(G_n(\eta):=\hat{g}_{n}(\eta)+ \psi(\param_{n-1} +\eta)\right)$. By Lemma \ref{lem:CRMM_descent} and the hypothesis that $\alpha \le \frac{\rho}{\rho+2L_\psi M_2}$, for each $N\ge 0$, 
			\begin{align}
				0\le \frac{\alpha\rho}{2}  \sum_{n=0}^{N}   \lVert V_{n}^{\star} \rVert^{2}  &\le \sum_{n=0}^{N} f(\param_{n}) - f(\param_{n+1}) + \Delta_{n} \\
				&\le  f(\param_{0}) - f^{*} + \sum_{n=1}^{\infty} \Delta_{n} <\infty.
			\end{align}
			It follows that $\lVert V_{n}^{\star} \rVert= o(1)$ and 
			\begin{align}\label{eq:CRMM_iter_complexity_bd1}
				\min_{0\le k \le N} \,   \lVert V_{k}^{\star} \rVert^{2} \le  \frac{2 (f(\param_{0}) - f^{*} + \sum_{n=1}^{\infty} \Delta_{n})}{\rho \alpha (N+1)}. 
			\end{align}

			Recall that $V_{n}^{\star}$ is the minimizer of the tangential surrogate $\hat{g}_{n}$ on the lifted constraint set $T^*_{\param_{n-1}}\subseteq T_{\param_{n-1}}\M$, which is convex by \ref{assumption:Optn2}\textbf{(iii)}. Hence 
			by the first-order optimality condition 
\begin{align}
\langle \nabla \hat{g}(V_n^\star) + \Proj_{T_{\theta_{n-1}}} \partial \psi(\theta_{n-1} + V_n^\star) \;,\; \eta - V_n^\star\rangle \ge 0 \quad \textup{for all $\eta \in T^{*}_{\theta_{n-1}}$}.
\end{align}
Therefore for all $\eta \in T^{*}_{\theta_{n-1}}$ with $\|\eta\|\le 1$ we have 
\begin{align}\label{eq:opt_bound1}
\langle \nabla \hat{g}_{n}(V_{n}^{\star}) + \Proj_{T_{\theta_{n-1}}} \partial \psi(\theta_{n-1} + V_n^\star) ,\, \eta\rangle  &\ge \langle \nabla \hat{g}_{n}(V_{n}^{\star}) + \Proj_{T_{\theta_{n-1}}} \partial \psi(\theta_{n-1} + V_n^\star) ,\,  V_{n}^{\star} \rangle  \\
&\ge -\|\nabla \hat{g}_n (V_n^\star) + \Proj_{T_{\theta_{n-1}}} \partial \psi(\theta_{n-1} + V_n^\star) \| \cdot \|V_n^\star\| 
\end{align}
Next, we bound the term $\|\nabla \hat{g}_n (V_n^\star) + \Proj_{T_{\theta_{n-1}}} \partial \psi(\theta_{n-1} + V_n^\star) \|$ by a linear term with respect to $\|V_n^\star\|$. Note by Lemma \ref{prop:boundedness_iterates_opt2}, for all $n\ge 1$ there exists constant $C$ such that $\|\grad \varphi(\param_{n-1})\|\le C$. Since $\hat{g}_{n}$ majorizes $\hat{\varphi}$ on $T_{\param}\M$ and and is exact at $\mathbf{0}$, and also noting \eqref{eq:pullback_derivative}, we get $\nabla \hat{g}_{n}(\mathbf{0}) = \nabla \hat{\varphi}(\mathbf{0}) = \grad \varphi(\param_{n-1})$. Moreover, denoting
			\begin{align}
				\zeta_{n}:= \nabla \hat{g}_{n}(\mathbf{0}) - \nabla \hat{g}_{n}(V_{n}^{\star}) ,
			\end{align}
			by the restricted $L_{n}$-smoothness of $\hat{g}_{n}:T_{\param_{n-1}}\rightarrow \R$, 
			\begin{align}
				\lVert \zeta_{n} \rVert \le L_{n} \lVert V_{n}^{\star} \rVert \le L \lVert V_{n}^{\star} \rVert.
			\end{align}
   The above and triangle inequality gives
   \begin{align}\label{eq:opt_bound2}
       \|\nabla \hat{g}_n (V_n^\star)\| \le \|\grad \varphi(\param_{n-1})\|+ L \lVert V_{n}^{\star} \rVert \le C+L \lVert V_{n}^{\star} \rVert.
   \end{align}
Then, note
\begin{align}\label{eq:opt_bound3}
    \|\Proj_{T_{\theta_{n-1}}} \partial \psi(\theta_{n-1} + V_n^*) \| \le \|\partial \psi(\theta_{n-1} + V_n^*) \|\le L_\psi
\end{align}
where the last inequality follows from Lemma \ref{lem:bounded_subgradient}. Hence, \eqref{eq:opt_bound1} together with \eqref{eq:opt_bound2} and \eqref{eq:opt_bound3} gives
\begin{align}
    \langle \nabla \hat{g}_{n}(V_{n}^{\star}) + \Proj_{T_{\theta_{n-1}}} \partial \psi(\theta_{n-1} + V_n^\star) ,\, \eta\rangle  &\ge -(C'+L\|V_n^\star\|)\cdot \|V_n^\star\|
\end{align}
where $C'= C+L_\psi$.
			It follows
			\begin{align}
			&\langle \grad \varphi(\param_{n-1}) +\Proj_{T_{\theta_{n-1}}} \partial \psi(\theta_{n-1} + V_n^\star) ,\eta\rangle \\
   &= \langle \grad \varphi(\param_{n-1})- \nabla \hat{g}_n (V_n^\star),\eta\rangle + \langle \nabla \hat{g}_n (V_n^\star)+ \Proj_{T_{\theta_{n-1}}} \partial \psi(\theta_{n-1} + V_n^\star),\eta\rangle \\
			&\ge -\|\zeta_n\|\cdot \|\eta\| -C'\|V_n^\star\| - L\|V_n^\star\|^{2}\\
			&\ge -(L +C')\|V_n^\star\|-L\|V_n^\star\|^{2}
			\end{align}
			for all $\eta \in T^*_{\param_{n-1}}$ such that $\lVert \eta \rVert\le 1$.
Therefore, \eqref{eq:CRMM_iter_complexity_bd1} and the above give
			\begin{align}
				&\min_{1\le k \le N} \left[ -\inf_{\eta\in T^*_{\param_{k-1}} ,\, \lVert\eta \rVert\le 1}\langle \grad \varphi(\param_{k-1}) +\Proj_{T_{\theta_{k-1}}} \partial \psi(\theta_{k-1} + V_k^\star),\frac{\eta}{\min\{ 1,r_{0}\}}\rangle \right] \\ 
    &\hspace{3cm}\le \frac{L +C'}{\min\{ 1,r_{0}\}} \sqrt{ \frac{2 (f(\param_{0}) - f^{*} + \sum_{n=1}^{\infty} \Delta_{n})}{\rho \alpha (N+1)} }
     +  \frac{L}{\min\{ 1,r_{0}\}} \frac{2 (f(\param_{0}) - f^{*} + \sum_{n=1}^{\infty} \Delta_{n})}{\rho \alpha (N+1)}.
			\end{align}
   When $N\ge \frac{L^2}{(L+C')^2}\frac{2 (f(\param_{0}) - f^{*} + \sum_{n=1}^{\infty} \Delta_{n})}{\rho \alpha}$, the above gives
\begin{align}
				&\min_{1\le k \le N} \left[ -\inf_{\eta\in T^*_{\param_{k-1}} ,\, \lVert\eta \rVert\le 1}\langle \grad \varphi(\param_{k-1}) +\Proj_{T_{\theta_{k-1}}} \partial \psi(\theta_{k-1} + V_k^\star),\frac{\eta}{\min\{ 1,r_{0}\}}\rangle \right] \\
    &\hspace{9cm}\le \frac{2(L +C')}{\min\{ 1,r_{0}\}} \sqrt{ \frac{2 (f(\param_{0}) - f^{*} + \sum_{n=1}^{\infty} \Delta_{n})}{\rho \alpha (N+1)} }.
			\end{align}
   
			This shows \textbf{(i)}. Note that \textbf{(iii)} is equivalent to \textbf{(i)} for the single-block ($m=1$) case.

		Next, we show \textbf{(ii)}. Recall $\psi=0$, so $f=\varphi$ and is continuously differentiable. For asymptotic stationarity, suppose a subsequence of the iterates $(\param_{n_{k}})_{k\ge 1}$ converges to some limit point $\param_{\infty}\in \Param$. Note by first-order optimality of $V_n^\star$ and the $\rho$-strongly convexity of $\hat{g}_n$,
			\begin{align}
			\Delta_n=\hat{g}_n(V_n)-\hat{g}_n(V_n^{\star}) \geq \langle \nabla  \hat{g}_n(V_n^{\star} ), V_n -V_n^{\star} \rangle + \frac{\rho}{2}\|V^{\star}\|^{2} \ge  \frac{\rho}{2}\|V_n-V_n^{\star}\|^{2}.
			\end{align}
   Since $\sum_{n=1}^{\infty}\Delta_n< \infty$ by \ref{assumption:A0_optimal_gap}, we have $\|V_n-V_n^{\star}\|=o(1)$. Hence by triangle inequality $\lVert V_{n} \rVert \le \|V_n-V_n^{\star}\| + \|V_n^{\star}\|=o(1)$.
  Therefore, we have $\lVert \param_{n} - \param_{n-1} \rVert = o(1)$ by Lemma \ref{lem:retaction_d_upper_bd}. Hence $\param_{n_{k}-1}\rightarrow \param_{\infty}$ as $k\rightarrow\infty$. For each $\param\in \Param$, let $B_{\param}$ denote the metric ball of radius half of the injectivity radius $\rinj(\param)$ centered at $\param$. Fix $\param'\in \Param\cap B_{\param_{\infty}}$ be arbitrary. Note that $\param'\in \Param\cap B_{\param_{\infty}} \cap B_{\param_{n_{k}}-1} $ for all  sufficiently large $k$ since $\param_{n}\in \Param$ for all $n\ge 1$ and $\param_{n_{k}-1}\rightarrow \param_{\infty}$. Denote by $\Gamma_{k}$ the parallel transport $\Gamma_{\param_{n_{k}-1}\rightarrow \param_{\infty}}$. Then 
			\begin{align}
				\langle - \Gamma_{k}\left(  \grad f(\param_{n_{k}-1})  \right) ,\,   \Gamma_{k} \big( \eta_{\param_{n_{k}}-1}(\param') \big) \rangle   =   \langle -\grad f(\param_{n_{k}-1}) ,\, \eta_{\param_{n_{k}}-1}(\param') \rangle \le  0.
			\end{align}
			Now since $f$ is continuously differentiable, we have $\grad f(\param_{n_{k}-1}) \rightarrow \grad f(\param_{\infty})$ as $k\rightarrow\infty$. Also, by the continuity of Riemannian metric, the left-hand side above converges to $\langle -\grad f(\param_{\infty}) ,\, \eta_{\param_{\infty}}(\param') \rangle$, so we obtain 
			\begin{align}
				\langle -\grad f(\param_{\infty}) ,\, \eta_{\param_{\infty}}(\param') \rangle \le  0.
			\end{align}
			Since $\param'\in \Param\cap B_{\param_{\infty}}$ is arbitrary, the above show that $\param_{\infty}$ is a stationary point of $f$ over $\Param$.
	\end{proof}

Now we prove Theorem \ref{thm:RBMM_MtMt} for the multi-block case $m\ge 2$. The proof is almost identical to the single-block case. 
	
\begin{proof}[\textbf{Proof of Theorem \ref{thm:RBMM_MtMt} for $m\ge 2$}]
		By the hypothesis, the step-size $\alpha_{n}^{(i)}$ for $n\ge 1$ and $i\in \{1,\dots,m\}$ satisfies $ \alpha_n^{(i)}= \alpha\le 1$.
		Therefore, for each $n\ge 1$, $\rtr_{\theta_{n-1}^{(i)}}(\alpha V^{(i)}_{n})$ is well-defined and belongs to $\Theta^{(i)}$. 
		
		Let  $V_{n}^{( i \star) }=\argmin_{\eta\in T^*_{\theta_{n-1}^{(i)}}} \left(G^{(i)}_n(\eta):=\hat{g}^{(i)}_{n}(\eta)+ \psi^{(i)}_n(\theta^{(i)}_{n-1} +\eta)\right)$. By Lemma \ref{lem:CRMM_descent} and the hypothesis that $\alpha \le \frac{\rho}{\rho+2L_\psi M_2}$, for each $N\ge 0$, 
		\begin{align}
			0< \frac{\alpha \rho}{2}  \sum_{n=0}^{N} \sum_{i=1}^{m}    \lVert V_{n}^{(i\star)} \rVert^{2} 
			&\le \sum_{n=0}^{N} \sum_{i=1}^{m}f_n^{(i)}(\theta_{n-1}^{(i)})-f_n^{(i)}(\theta_{n}^{(i)}) + \Delta_{n} \\
			&\le \sum_{n=0}^{N} f(\param_{n}) - f(\param_{n+1}) + m \Delta_{n} \\
			&\le  f(\param_{0}) - f^{*} + m\sum_{n=1}^{\infty} \Delta_{n}=:M <\infty.
		\end{align}
		It follows that $\sum_{i=1}^{m} \lVert V_{n}^{(i\star)} \rVert= o(1)$ and 
		\begin{align}\label{eq:CRMM_iter_complexity_bd1_gen}
			\min_{0\le k \le N} \, \sum_{i=1}^{m}   \lVert V_{k}^{(i\star)} \rVert^{2}  \le  \frac{2M}{\rho \alpha  N}. 
		\end{align}

		Recall that $V_{n}^{(i\star)}$ is the minimizer of the surrogate $G^{(i)}_{n}$ on the lifted constrained set $T^*_{\theta_{n-1}^{(i)}}\subseteq T_{\theta_{n-1}^{(i)}}\M^{(i)}$, which is convex by \ref{assumption:Optn2}\textbf{(iii)}. Hence 
			by the first-order optimality condition 
\begin{align}
\langle \nabla \hat{g}^{(i)}_n(V_n^\star) + \Proj_{T_{\theta^{(i)}_{n-1}}} \partial \psi^{(i)}_n(\theta^{(i)}_{n-1} + V_n^{(i\star)}) \;,\; \eta - V_n^{(i\star)}\rangle \ge 0 \quad \textup{for all $\eta \in T^{*}_{\theta^{(i)}_{n-1}}$}.
\end{align}
Therefore for all $\eta \in T^{*}_{\theta^{(i)}_{n-1}}$ with $\|\eta\|\le 1$ we have 
\begin{align}
\langle \nabla \hat{g}^{(i)}_{n}(V_{n}^{(i\star)}) + \Proj_{T_{\theta^{(i)}_{n-1}}} \partial \psi^{(i)}_n(\theta^{(i)}_{n-1} + V_n^{(i\star)}) ,\, \eta\rangle  &\ge \langle \nabla \hat{g}^{(i)}_{n}(V_{n}^{(i\star)}) + \Proj_{T_{\theta^{(i)}_{n-1}}} \partial \psi^{(i)}_n(\theta^{(i)}_{n-1} + V_n^{(i\star)}) ,\,  V_{n}^{(i\star)} \rangle  \\
&\ge -\|\nabla \hat{g}^{(i)}_n (V_n^{(i\star)}) + \Proj_{T_{\theta^{(i)}_{n-1}}} \partial \psi^{(i)}_n(\theta^{(i)}_{n-1} + V_n^{(i\star)}) \| \cdot \|V_n^{(i\star)}\|\\
&\ge -(C'+L\|V_n^{(i\star)}\|)\cdot \|V_n^{(i\star)}\|\label{eq:CRMM_fopt_bd1}
\end{align}
for some constant $C'>0$, which can be determined following the same line of $m=1$ case.

Since $\hat{g}_{n}^{(i)}$ majorizes $\hat{\varphi}_{n}^{(i)}$ on $T_{\theta_{n-1}}\M^{(i)}$ and is exact at $\theta_{n-1}^{(i)}$, and also noting  and also noting \eqref{eq:pullback_derivative}, we get $\nabla \hat{g}_{n}^{(i)}(\mathbf{0}) = \nabla \hat{\varphi}_{n}^{(i)}(\mathbf{0}) = \grad \varphi_{n}^{(i)}(\theta_{n-1}^{(i)})$.    Moreover, denoting
		\begin{align}\label{eq:def_zeta_n_i}
			\zeta_{n}^{(i)}:= \nabla \hat{g}_{n}^{(i)}(\mathbf{0}) - \nabla \hat{g}_{n}^{(i)}(V_{n}^{(i\star)}),
		\end{align}
		by the restricted $L_{n}$-smoothness of $\hat{g}_{n}:T_{\param_{n-1}}\rightarrow \R$, 
		\begin{align}\label{eq:CRMM_iter_complexity_bd22}
			\lVert \zeta_{n}^{(i)} \rVert \le L_{n}^{(i)} \lVert V_{n}^{(i\star)} \rVert  \le L\lVert V_{n}^{(i\star)} \rVert.
		\end{align}
		Therefore it follows
			\begin{align}
			&\langle \grad \varphi_n^{(i)}(\theta^{(i)}_{n-1})+\Proj_{T_{\theta^{(i)}_{n-1}}} \partial \psi_n^{(i)}(\theta^{(i)}_{n-1}+V_n^{(i)}),\eta\rangle\\
   &= \langle \grad \varphi_n^{(i)}(\theta^{(i)}_{n-1})- \nabla \hat{g}_n (V_n^{(i\star)}),\eta\rangle + \langle \nabla \hat{g}_n (V_n^{(i\star)})+\Proj_{T_{\theta^{(i)}_{n-1}}} \partial \psi_n^{(i)}(\theta^{(i)}_{n-1}+V_n^{(i)}),\eta\rangle \\
			&\ge -\|\zeta^{(i)}_n\|\cdot \|\eta\| -(C'+L\|V_n^{(i\star)}\|)\|V_n^{(i\star)}\| \\
			&\ge -(L +C')\|V_n^{(i\star)}\|-L\|V_n^{(i\star)}\|^2
			\end{align}
			for all $\eta \in T^*_{\theta^{(i)}_{n-1}}$ and $\|\eta\|\le 1$.
   Thus it follows that
		\begin{align}
		\label{eq:CRMM_iter_complexity_bd3}
			&\sum_{i=1}^{m}  - \inf_{\eta \in T^*_{\theta^{(i)}_{n-1}},\|\eta\|\le 1}\langle \grad \varphi_n^{(i)}(\theta^{(i)}_{n-1})+\Proj_{T_{\theta^{(i)}_{n-1}}} \partial \psi_n^{(i)}(\theta^{(i)}_{n-1}+V_n^{(i)}),\frac{\eta}{\min\{r_0,1\}}\rangle  \\ &\hspace{3cm}\le \frac{L +C'}{\min\{r_0,1\}} \sum_{i=1}^{m} \lVert V_{n}^{(i\star)} \rVert +\frac{L}{\min\{r_0,1\}}\sum_{i=1}^{m} \lVert V_{n}^{(i\star)} \rVert^2.
		\end{align}

		Therefore, the above with \eqref{eq:CRMM_iter_complexity_bd1_gen} implies 
		\begin{align}\label{eq:CRMM_iter_complexity_bd4}
			&\min_{1\le n \le N} \sum_{i=1}^{m}   - \inf_{\eta \in T^*_{\theta^{(i)}_{n-1}},\|\eta\|\le 1}\langle \grad \varphi_n^{(i)}(\theta^{(i)}_{n-1})+\Proj_{T_{\theta^{(i)}_{n-1}}} \partial \psi_n^{(i)}(\theta^{(i)}_{n-1}+V_n^{(i)}),\frac{\eta}{\min\{r_0,1\}}\rangle \\  &\hspace{8cm}\le \frac{L +C'}{\min\{r_0,1\}}\sqrt{\frac{2M}{ \rho \alpha N}}
            + \frac{L}{\min\{r_0,1\}}\frac{2M}{ \rho \alpha N}.
		\end{align}
  When $N\ge \frac{L^2}{(L+C')^2}\frac{2M}{\rho\alpha}$, the above gives 
\begin{align}
			\min_{1\le n \le N} \sum_{i=1}^{m}   - \inf_{\eta \in T^*_{\theta^{(i)}_{n-1}},\|\eta\|\le 1}\langle \grad \varphi_n^{(i)}(\theta^{(i)}_{n-1})+\Proj_{T_{\theta^{(i)}_{n-1}}} \partial \psi_n^{(i)}(\theta^{(i)}_{n-1}+V_n^{(i)}),\frac{\eta}{\min\{r_0,1\}}\rangle  \le& \frac{2(L +C')}{\min\{r_0,1\}}\sqrt{\frac{2M}{ \rho \alpha N}}.
		\end{align}
  
		This shows \textbf{(i)}.


		Next, we show \textbf{(ii)}. Recall $\psi=0$, so $f=\varphi$ and is continuously differentiable. For asymptotic stationarity, suppose a subsequence of the iterates $(\param_{n_{k}})_{k\ge 1}$ converges to some limit point $\param_{\infty}\in \Param$. Note by first-order optimality of $V_n^{(i\star)}$ and the $\rho$-strongly convexity of $\hat{g}_n$,
			\begin{align}
                \label{eq:tangent_optimal_gap}
			\Delta_n=\hat{g}^{(i)}_n(V_n)-\hat{g}^{(i)}_n(V_n^{(i\star)}) \geq \langle \nabla  \hat{g}^{(i)}_n(V_n^{(i\star)} ), V^{(i)}_n -V_n^{(i\star)} \rangle + \frac{\rho}{2}\|V^{(i\star)}\|^{2} \ge  \frac{\rho}{2}\|V^{(i)}_n-V_n^{(i\star)}\|^{2}.
			\end{align}
   Since $m\sum_{n=1}^{\infty}\Delta_n\le \infty$ by \ref{assumption:A0_optimal_gap}, we have $\sum_{i=1}^{m}\|V^{(i)}_n-V_n^{(i\star)}\|=o(1)$. Hence by triangle inequality $\sum_{i=1}^{m}\lVert V^{(i)}_{n} \rVert \le \sum_{i=1}^{m}\|V^{(i)}_n-V_n^{(i\star)}\| + \sum_{i=1}^{m}\|V_n^{(i\star)}\|=o(1)$. Therefore, we have $\lVert \param_{n} - \param_{n-1} \rVert = o(1)$ by Lemma \ref{lem:retaction_d_upper_bd}. Hence $\param_{n_{k}-1}\rightarrow \param_{\infty}$ as $k\rightarrow\infty$. Therefore since $f$ is continuously differentiable, for each $i=1,\dots, m$,
		\begin{align}
			\lim_{k\to\infty}\grad f_{n_k}^{(i)}(\theta_{n_k-1}^{(i)}) &=  \lim_{k\to\infty} \grad_{i} f(\theta_{n_k}^{(1)},\cdots, \theta_{n_k}^{(i-1)},\theta_{n_k-1}^{(i)},\theta_{n_k-1}^{(i+1)},\cdots,\theta^{(m)}_{n_k-1}) \\&=\grad_{i}f(\theta_{\infty}^{(1)},\cdots, \theta_{\infty}^{(i-1)},\theta_{\infty}^{(i)},\theta_{\infty}^{(i+1)},\cdots,\theta^{(m)}_{\infty})\\
			&=\grad_i f(\param_{\infty}) . 
		\end{align}
		Fix $i\in \{1,\dots,m\}$. For each $\theta\in \Theta^{(i)}$, let $B_{\theta}$ denote the metric ball of radius half of the injectivity radius $\rinj^{(i)}(\theta)$ centered at $\theta$ in $\M^{(i)}$.  Fix $\theta'\in \Theta^{(i)}\cap B_{\param_{\infty}^{(i)}}$ be arbitrary. Note that $\param_{n_{k}-1}\in \Theta^{(i)}\cap B_{\theta_{\infty}^{(i)}} \cap B_{\theta_{n_{k}-1}^{(i)}} $ for all  sufficiently large $k$ since $\theta_{n}^{(i)}\in \Theta^{(i)}$ for all $n\ge 1$ and $\theta_{n_{k}-1}^{(i)}\rightarrow \theta_{\infty}^{(i)}$. Denote by $\Gamma_{k}$ the parallel transport $\Gamma_{\theta_{n_{k}-1}^{(i)}\rightarrow \theta_{\infty}^{(i)}}$ on $\M^{(i)}$. Then 
		\begin{align}
			\left\langle - \Gamma_{k}\left(  \grad f_{n_k}^{(i)}(\theta_{n_{k}-1}^{(i)})  \right) ,\,   \Gamma_{k} \big( \eta_{\theta_{n_{k}}-1}^{(i)}(\theta') \big) \right\rangle   =   \left\langle -\grad f_{n_k}^{(i)}(\theta_{n_{k}-1}^{(i)}) ,\, \eta_{\theta_{n_{k}}-1}^{(i)}(\theta') \right\rangle \le  0.
		\end{align}
		Also note that by the continuity of Riemannian metric, the left-hand side above converges to $\langle -\grad_{i} f(\param_{\infty}) ,\, \eta_{\theta_{\infty}^{(i)}}(\theta') \rangle$, so we obtain 
		\begin{align}
			\langle -\grad_{i} f(\param_{\infty}) ,\, \eta_{\theta_{\infty}^{(i)}}(\theta') \rangle\le  0.
		\end{align}
		Since $\theta'\in \Theta^{(i)}\cap B_{\theta_{\infty}^{(i)}}$ and $i\in \{1,\dots,m\}$ are arbitrary, the above shows that $\param_{\infty}$ is a stationary point of $f$ over $\Param$.

		Lastly, we show \textbf{(iii)}. For each $n\ge 1$ and $i=1,\dots,m$, denote 
			\begin{align}
				\xi_{n}^{(i)}:=\grad f_{n}^{(i)}(\theta_{n-1}^{(i)}) - \grad_{i} f(\param_{n-1}). 
			\end{align}
			Now by writing 
			\begin{align}
				\nabla \hat{g}_{n}^{(i)}(V_{n}^{(i\star)}) -  \grad_{i} \varphi(\param_{n-1}) &= \left( \nabla \hat{g}_{n}^{(i)}(V_{n}^{(i\star)}) - \nabla \hat{g}_{n}^{(i)}(\mathbf{0}) \right) + \left( \grad \varphi_{n}^{(i)}(\theta_{n-1}^{(i)}) - \grad_{i} \varphi(\param_{n-1})  \right)  \\
				&= \zeta_{n}^{(i)} + \xi_{n}^{(i)},
			\end{align}
from \eqref{eq:CRMM_fopt_bd1}, for all $\eta \in T^*_{\theta^{(i)}_{n-1}}$ with $\|\eta\|\le 1$,
		\begin{align}
		\label{eq:CRMM_multi_vi}
			&\langle \grad_i \varphi(\param_{n-1})+\Proj_{T_{\theta^{(i)}_{n-1}}} \partial \psi_n^{(i)}(\theta^{(i)}_{n-1}+V_n^{(i)}),\eta\rangle \\ 
   &= -\langle \xi_{n}^{(i)},\eta\rangle -\langle \zeta_{n}^{(i)},\eta\rangle + \langle \nabla \hat{g}_n (V_n^{(i\star)})+\Proj_{T_{\theta^{(i)}_{n-1}}} \partial \psi_n^{(i)}(\theta^{(i)}_{n-1}+V_n^{(i)}),\eta\rangle \\
			&\ge -(\|\zeta^{(i)}_n\|+\|\xi^{(i)}_n\|)\cdot \|\eta\| -C'\|V_n^{(i\star)}\|-L\|V_n^{(i\star)}\|^2 \\
			&\ge -(\|\zeta^{(i)}_n\|+\|\xi^{(i)}_n\|) -C'\|V_n^{(i\star)}\|-L\|V_n^{(i\star)}\|^2.
			\end{align}
		According to the hypothesis, by using a triangle inequality,  
			\begin{align}
				\lVert \xi_{n}^{(i)} \rVert &\le L'(\lVert V_{n}^{(1)}\rVert + \cdots + \lVert V_{n}^{(i-1)}\rVert) \le L'\sum_{i=1}^{m} \lVert V_{n}^{(i)} \rVert. 
			\end{align}
			Combining with \eqref{eq:CRMM_iter_complexity_bd22},
			\begin{align}
				\lVert \zeta_{n}^{(i)} \rVert+ \lVert\xi_{n}^{(i)} \rVert\le L \lVert V_{n}^{(i\star)} \rVert + L'\sum_{i=1}^{m} \lVert V_{n}^{(i)} \rVert, 
			\end{align}
			where  $\zeta_{n}^{(i)}$ is defined in \eqref{eq:def_zeta_n_i}. From \eqref{eq:tangent_optimal_gap}, we have
   \begin{equation}
       \frac{\rho}{2}\sum_{n=1}^{N}\sum_{i=1}^{m}\|V^{(i)}_n-V_n^{(i\star)}\|^{2} < m\sum_{n=1}^{\infty}\Delta_n <\infty.
   \end{equation}
   Therefore, 
   \begin{equation}
    \sum_{n=1}^{N}\sum_{i=1}^{m}\|V^{(i)}_{n}\|^2 \le 2\sum_{n=1}^{N}\sum_{i=1}^{m}\|V^{(i)}_n-V_n^{(i\star)}\|^{2} + 2\sum_{n=1}^{N}\sum_{i=1}^{m}\|V_n^{(i\star)}\|^{2} < \frac{4\alpha m\sum_{n=1}^{\infty}\Delta_n+4M}{\alpha \rho}.
   \end{equation}
   And also,
    \begin{equation}
\sum_{n=1}^{N}\sum_{i=1}^{m}\|V^{(i)}_{n}\|^2+\sum_{i=1}^{m}\|V^{(i\star)}_{n}\|^2 < \frac{4\alpha m\sum_{n=1}^{\infty}\Delta_n+6M}{\alpha \rho}.
   \end{equation}
   Hence
   \begin{equation}
   \label{eq:CRMM_iter_complexity_bd1_gen1}
       \min_{0\le n\le N}\sum_{i=1}^{m}\|V^{(i)}_{n}\|^2+\sum_{i=1}^{m}\|V^{(i\star)}_{n}\|^2 < \frac{4\alpha m\sum_{n=1}^{\infty}\Delta_n+6M}{\alpha \rho N}.
   \end{equation}
   
   From \eqref{eq:CRMM_iter_complexity_bd1_gen1}, there exists a subsequence $(k_{n})_{n\ge 1}$ such that $1\le k_{n}\le n$ and 
			\begin{align}\label{eq:CRMM_iter_complexity_bd2_gen}
				\max\left\{ \sum_{i=1}^{m}   \lVert V_{k_{n}}^{(i\star)} \rVert^{2},\, \sum_{i=1}^{m}     \lVert V_{k_{n}}^{(i)} \rVert^{2}   \right\} \le    \sum_{i=1}^{m}   \lVert V_{k_{n}}^{(i\star)} \rVert^{2} +   \lVert V_{k_{n}}^{(i)} \rVert^{2} \le  \frac{M'}{\alpha \rho n},
			\end{align}
   where $M':=4\alpha m\sum_{n=1}^{\infty}\Delta_n+6M$.
   
			Hence we get 
			\begin{align}
				(\lVert \zeta_{n}^{(i)} \rVert+ \lVert\xi_{n}^{(i)} \rVert) + C'\|V_n^{(i\star)}\| + L\|V_n^{(i\star)}\|^2
				&\le (L +C')\lVert V_{n}^{(i\star)}\rVert + L' \sum_{i=1}^{m} \lVert V_{n}^{(i)}\rVert+ L\sum_{i=1}^{m}\|V_n^{(i\star)}\|^2  \\
				&\le (L +C')\sqrt{\frac{M'}{\rho\alpha n}} + L'\sqrt{\frac{M'}{\rho \alpha n}} + L\frac{M}{\rho \alpha n} \\
				&= (L+C'+ L') \sqrt{\frac{M'}{\rho \alpha n}}+ L\frac{M}{\rho \alpha n}. 
			\end{align}
	This combining \eqref{eq:CRMM_multi_vi} gives
	\begin{align}
			&\min_{1\le n \le N} \sum_{i=1}^{m}   - \inf_{\eta \in T^*_{\theta^{(i)}_{n-1}},\|\eta\|\le 1}\langle \grad_i \varphi(\param_{n-1})+\Proj_{T_{\theta^{(i)}_{n-1}}} \partial \psi_n^{(i)}(\theta^{(i)}_{n-1}+V_n^{(i)}),\frac{\eta}{\min\{r_0,1\}}\rangle  \\ &\hspace{9cm}\le \frac{L+C' + L'}{\min\{r_0,1\}} \sqrt{\frac{M'}{\rho \alpha N}}+ \frac{L}{\min\{r_0,1\}}\frac{M}{\rho \alpha N}.
		\end{align}
  When $N\ge \frac{L^2}{(L+C'+L')^2}\frac{M^2}{M'\rho\alpha}$, the above gives,
  \begin{align}
			\min_{1\le n \le N} \sum_{i=1}^{m}   - \inf_{\eta \in T^*_{\theta^{(i)}_{n-1}},\|\eta\|\le 1}\langle \grad_i \varphi(\param_{n-1})+\Proj_{T_{\theta^{(i)}_{n-1}}} \partial \psi_n^{(i)}(\theta^{(i)}_{n-1}+V_n^{(i)}),\frac{\eta}{\min\{r_0,1\}}\rangle  \le \frac{2(L+C' + L')}{\min\{r_0,1\}} \sqrt{\frac{M'}{\rho \alpha N}},
\end{align}
	the desired complexity result.
\end{proof}	

\section{Details of Numerical Experiments}
\label{sec:appendix_numerics}

For all the numerical experiments in Section \ref{sec:stylized_app}, we test the algorithms with random initial points. Each experiment is repeated for $10$ times with i.i.d. Gaussian/uniform random initial points, depending on the setting of the problem. The \textit{relative reconstruction error} defined as $\operatorname{Error}(X)=\|X - X^*\|/\|X^*\|$ is computed as a function of elapsed time (on a Macbook Pro 2020 with 1.4 GHz Quad-Core Intel Core i5). Averaged relative reconstruction error with standard deviation are shown by the solid lines and shaded regions in all the plots.

\subsection{Nonnegative Tensor Decomposition with Riemannian Constraints}

For the (nonnegative) tensor decomposition problem with Riemannian constraints we studied in Section \ref{sec:stylized_app}, we pose Riemannian constraints to the first block. Specifically, in the experiments, we let $\M^{(1)}=\mathcal{R}_r \subseteq \R^{50\times 10}$ with $r=2$. The other two blocks are Euclidean spaces, i.e. $\M^{(2)} = \R^{40\times 10}$ and $\M^{(3)} = \R^{30\times 10}$. When the loading matrices are nonnegative, the  nonnegativity can be considered as an additional constraint set within each block. Namely, the constraint sets $\Theta^{(2)}$ and $\Theta^{(3)}$ are nonnegative orthants of the corresponding Euclidean spaces. As for the first low-rank block $M^{(1)}$, it is shown in \cite{song2020nonnegative} that the intersection of the fixed-rank manifold and nonnegative orthant remains a smooth manifold. Therefore, one can also simply let $\M^{(1)}$ to be the nonnegative fixed-rank manifold. The $\ell_1$-regularizer in \eqref{eq:NTF} brings nonsmoothness to the problem. In the numerical experiments, we set the regularization parameter $\lambda_{i}=10^{-2}$ for $i=1, 2, 3$.

\subsection{Regularized Nonnegative Matrix Factorization with Riemannian Constraints}

Similar to the tensor decomposition problem described in the previous section, we let $\M^{(1)}=\mathcal{R}_r \subseteq \R^{50\times 10}$ with $r=5$ and $\M^{(2)}=\R^{10\times 40}$. The nonnegativity can be viewed as constraint sets, as stated in the previous section. This low-rank matrix factorization (without $\ell_1$ regularization) is recently studied in \cite{song2022tangent}, where the authors proposed a tangent space based alternating projection method. In our setting, we further pose the $\ell_1$-regularization term that brings nonsmoothness. The regularization parameter is set to be $\lambda=10^{-2}$.

\subsection{Low-rank Matrix Recovery}
\label{sec:appendix_matrix_rec}
We state the NIHT for solving \eqref{eq:compressed_sensing} in Alg. \ref{alg:NIHT}.

\begin{algorithm}
   \caption{Normalized Iterative Hard Thresholding (NIHT) \cite{tanner2013normalized}}
   \label{alg:NIHT}
\begin{algorithmic}
   \STATE {\bfseries Input:} Initial point $X_0$, left singular vector space $U_0$
   \FOR{$i= 1,\cdots, n$}
   \STATE $\nabla f(X_i) = \mathcal{A}^* (\mathcal{A}(X_i)-b)$
   \STATE $\alpha_i = \frac{\|\Proj_{U_i}(\nabla f(X_i))\|^2}{\|\mathcal{A}\Proj_{U_i}(\nabla f(X_i))\|^2}$
   \STATE $X_{i+1} = \Proj_{\mathcal{R}_r} (X_i - \alpha_i \nabla f(X_i))$
   \ENDFOR
   \STATE \textbf{output:} $X_{n}$
\end{algorithmic}
\end{algorithm}

Here $\mathcal{A}^*: \R^{p}\to \R^{m\times n}$ is the Hermitian adjoint operator of $\mathcal{A}$, $\Proj_{U_i} = U_i U_i^*$ is the projection onto the left singular vector subspace of $X_i$.
Note NIHT is a projected gradient descent algorithm with adaptive step size. 

In the numerical validation in Figure \ref{fig:compressed_sensing}, we set the dimensions to be $m=50$, $n=12$, $p=300$ or $450$ respectively. The true solution $X^*$ is a low-rank matrix in $\R^{m\times n}$ with rank $r=3$. The sensing matrices $A_i$ for $i=1,\cdots, p$ are i.i.d. Gaussian random matrices. The observations $b$ is generated by $b=\mathcal{A}(X^*)$. 

\subsection{Inexact RGD}
We show the performance of inexact RGD based on the low-rank matrix recovery problem in Figure \ref{fig:inexact_RGD}. The dimensions are set to be $m=50$, $n=12$, and $p=300$. The inexactness is posted by adding noise to the Riemannian gradient with $\Delta_n = c/(n+1)^2$, where $n$ is the iteration number. The inexact tBMM is implemented with $c=1$. In the left plot of Figure \ref{fig:inexact_RGD}, inexact tBMM shows similar performance as exact tBMM where both of them outperform NIHT. In the right plot of Figure \ref{fig:inexact_RGD}, it is shown that inexact RGD with different noise parameter $c$ converges at similar speed.

\end{document}